\newcommand{\eeq}{\end{equation}}
\newtheorem{theorem}{Theorem}[section]
\numberwithin{equation}{section}
\newtheorem{uess}[theorem]{Lemma}
\newtheorem{guess}[theorem]{Theorem}
\newtheorem{remark}[theorem]{Remark}
\newtheorem{corollary}[theorem]{Corollary}
\newtheorem{example}[theorem]{Example}
\newtheorem{definition}[theorem]{Definition}
\newtheorem{proposition}[theorem]{Proposition}
\title{Exponential stability of systems of vector delay differential equations  
with applications to second order equations}
\author{Leonid Berezansky \\
Dept. of Math, Ben-Gurion University of the Negev, Beer-Sheva 84105, Israel \\
and Elena Braverman$^1$ \\
Dept. of Math \& Stats, University of Calgary,  2500 University Dr. NW, Calgary, AB, Canada T2N1N4}
\date{\today}
\begin{document}

\footnotetext[1]{Corresponding author. E-mail {\em
maelena@math.ucalgary.ca}. Fax (403)-282-5150. Phone (403)-220-3956.
Partially supported by the NSERC Research Grant RGPIN-2020-03934.}

\begin{abstract}
Various results and techniques, such as 
Bohl-Perron theorem, a priori solution estimates,
M-matrices and the matrix measure, are applied to obtain new explicit exponential stability
conditions for the system  of vector functional differential equations 
$$
\dot{x_i}(t)=A_i(t)x_i(h_i(t)) +\sum_{j=1}^n \sum_{k=1}^{m_{ij}} B_{ij}^k(t)x_j(h_{ij}^k(t))
+ \sum_{j=1}^n\int\limits_{g_{ij}(t)}^t K_{ij}(t,s)x_j(s)ds,~i=1,\dots,n.
$$
Here $x_i$ are unknown vector functions, $A_i, B_{ij}^k, K_{ij}$ are  matrix functions, $h_i,h_{ij}^k, g_{ij}$  are
delayed arguments.
Using these results, we deduce
explicit exponential stability tests for second order vector delay differential equations.


\textbf{Keywords:} Exponential stability; differential systems with matrix coefficients and a distributed delay; second order vector delay differential equations;  Bohl-Perron theorem; matrix measure; M-matrices.

\textbf{AMS(MOS) subject classification:} 34K20, 34K06, 34K25.

\end{abstract}

\maketitle

\section{Introduction}

Exponential stability of vector, scalar and higher order delay differential equations has attracted a lot of attention, see \cite{AS,BB1,BDSZ1,Hale,Gil,Gopalsamy,KN,KM,Kuang,Ngoc1} and the bibliography therein. Considered models included neutral equations and systems with both concentrated and distributed delays.
However,  for two classes of linear functional differential equations there are only few stability results. These types are
systems of several linear delay differential equations of the first order with matrix coefficients and systems, where at least one equation is of the second or higher order.
Scientific interest to these models is not purely theoretical.
There are many interesting real-world applications of such systems. For example, 
the ordinary differential vector equation of the second order
\begin{equation}\label{1.1}
\ddot{x}(t)+A(t)\dot{x}(t)+B(t)x(t)=0
\end{equation}
arises in control theory \cite{Harris, Rugh}.
Explicit asymptotic stability conditions for vector ordinary differential equations of the second order were obtained
in \cite{Harris, Gil_a, Rugh, Sun, Zevin}. Delay differential vector equations of the second order are  natural generalizations
of \eqref{1.1} that also can be applied in control theory. However, presently this application is not practical due to lack of qualitative results for this class of equations. We can mention only the paper \cite{Tunc1}, where for the nonlinear vector equation
\begin{equation}\label{1.2}
\ddot{x}(t)+F(x(t),\dot{x}(t))\dot{x}(t)+H(x(t-\tau))=L(t)
\end{equation}
asymptotic stability of the zero solution and the
boundedness of all solutions of equation \eqref{1.2} were investigated.
One of the goals of this paper is to fill this gap, by studying exponential stability of linear and nonlinear
vector delay differential equations of the second order.
 
The paper is organized as follows. Section 2 contains relevant 
definitions, notations and auxiliary statements. In Section 3, we obtain the main result of the paper on exponential stability for linear systems of functional differential equations with matrix coefficients. We also give  several  corollaries, one of them will further be applied to second order vector equations.
Section 4 deals with exponential stability for a linear delay differential equation of the second order with matrix coefficients. 
A stability test obtained here is one of the first stability results for this class of equations.
As a corollary, we deduce new stability conditions for non-delay vector equation \eqref{1.1}. 
In Section 5, we compare stability results of the present paper with known ones.
Section 5 also includes some open problems and topics for future research.

\section{Preliminaries}

The main object of the present paper is
\begin{equation}
\label{2.1}
\dot{x_i}(t)= 
A_i(t)x_i(h_i(t)) +\sum_{j=1}^n \sum_{k=1}^{m_{ij}} B_{ij}^k(t)x_j(h_{ij}^k(t)) 
+ \displaystyle  \sum_{j=1}^n\int\limits_{g_{ij}(t)}^t K_{ij}(t,s)x_j(s)ds,~i=1,\dots,n, ~ t \geq t_0\geq 0, 
\end{equation}
where $x_i: {\mathbb R} \to {\mathbb R}^d$, $i=1,\dots,n$ are unknown vector-functions,
$A_i, B_{ij}^k, K_{ij}$ are given $d\times d$ matrices with Lebesgue  measurable essentially bounded on the semi-axis $[0,\infty)$
entries, and the delays $h_i,h_{ij}^k, g_{ij}$ are Lebesgue measurable functions.

Let us denote by $\| \cdot \|$ an arbitrary vector norm,
the same notation will be used for the induced matrix norm,
$I$ is the identity matrix. The absolute value of the matrix is understood componentwise: 
if $B=\left(b_{ij}\right)_{i,j=1}^d$ then $|B|:=\left(|b_{ij}|\right)_{i,j=1}^d$,
we use the same notation for vectors. 
Matrix or vector inequalities $A\leq B$ or $A<B$ are also understood componentwise: each entry satisfies the inequality.
All the relations (equations and inequalities) with Lebesgue measurable functions are understood almost everywhere.

To get main results, we will utilize the matrix measure (the logarithmic norm)
defined in~\cite{S,Z} and~\cite[Table 3.1, p. 286]{KM} as
\begin{equation*}
\mu(B):=\lim_{\vartheta\rightarrow 0^+}\frac{\|I+\vartheta B\|-1}{\vartheta} \, .
\end{equation*}
In particular, for the maximum norm ${\|x\|}_{\infty}:=\max_{i=1,\dots, d}|x_i|$ we have for $B=(b_{ij})_{i,j=1}^d$
$${\|B\|}_{\infty}:=\max_{i=1,\dots,d}\sum_{j=1}^{d} |b_{ij}|, ~~
\mu_{\infty}(B)=\max_{i=1\dots,d}\left\{b_{ii}+\sum_{j=1,j\neq i}^{d} |a_{ij}|\right\}.
$$

The following classical definition for an $M$-matrix will be used.

\begin{definition} \cite{Berman}
A matrix $B=(b_{ij})_{i,j=1}^n$ is called a (non-singular) {\em $M$-matrix} if $b_{ij}\leq 0, i\neq j$, and
one of the following equivalent conditions holds:
\begin{enumerate}
\item
$B$ is invertible, and $B^{-1} \geq 0$;
\item
the leading principal minors of the matrix $B$ are positive.
\end{enumerate}
\end{definition}

\begin{remark}
\label{obvious}
Note that for any three column vectors $X\leq Y$ and $Z\geq 0$ we have $Z^T(Y-X) \geq 0$ as a sum of non-negative numbers. Thus, for any matrix $A\geq 0$ and $X\leq Y$ with entries of arbitrary signs, we have $AX \leq AY$.
\end{remark}

For a fixed bounded interval $\Omega=[t_0,t_1]$, let  $\|y\|_{\Omega}= \mbox{ess}\sup_{t\in \Omega} \|y(t)\|$, 
also for a half-line $\|f\|_{[t_0,\infty)}=\mbox{ess}\sup_{t\geq t_0} \|f(t)\|$.
Denote by ${{\bf L}}_{\infty}^d[t_0,t_1]$ the space of all essentially bounded on $\Omega$
vector functions $y$ with the norm $\|y\|_{\Omega}$, the same for ${{\bf L}}_{\infty}^d[t_0,\infty)$
and for the spaces ${{\bf L}}_{\infty}^{d \times d} [t_0,t_1]$, ${{\bf L}}_{\infty}^{d \times d} [t_0,\infty)$ of matrix functions.

Assume that for a fixed $t_0\geq 0$ vector-functions 
$
\varphi_i
\colon(-\infty,t_0]\rightarrow {\mathbb R}^d$, $i=1, \dots, n
$
are Borel measurable and bounded.
Below for every $t_0\geq 0$,  system \eqref{2.1} will be considered with the initial condition 
\begin{equation}\label{2.2}
x_i(t)=\varphi_i(t),\,\,\,\,t\leq t_0, ~~i=1, \dots, n 
\end{equation}
under the hypotheses: 
\begin{itemize}
\item[$(i)$] 
$A_i, B_{ij}^{k},K_{ij} \colon [0,\infty)\to\mathbb{R}^{d\times d}$, $k=1,\dots,m_{ij}$,
$i,j=1,\dots,n$  belong to ${{\bf L}}_{\infty}^{d \times d} [0,\infty)$.

\item[$(ii)$] 
$ h_{i}, h_{ij}^k, g_{ij} \colon [0,\infty)\to\mathbb{R}$,
are Lebesgue measurable functions,
and there are numbers $\tau_{ij}^k>0, \sigma_{ij}>0, \tau_{i}\geq 0$ such that almost everywhere
\begin{equation}\label{2.3}
0 \leq t-h_{ij}^k(t)\leq \tau_{ij}^k,~ 0 \leq t-g_{ij}(t)\leq \sigma_{ij}, ~0 \leq t-h_{i}(t)\leq \tau_{i},
~k=1,\dots,m_{ij}, ~i,j=1,\dots,n, ~t\geq 0.
\end{equation}
\end{itemize}
A solution of problem~\eqref{2.1},~\eqref{2.2} is understood
in the sense of the following definition.

\begin{definition}\label{definition2.1}
A set of vector-functions $x_i\colon {\mathbb R}\rightarrow {\mathbb R}^d, ~i=1,\dots,n$
is called {\bf a solution of problem}~\eqref{2.1},~\eqref{2.2} if $x_i$
satisfy~\eqref{2.1} for almost
all $t\in [t_0,\infty)$
and \eqref{2.2} for all $t\in [t_0,t_0-\tilde{\tau}_i]$, 
where $\displaystyle \tilde{\tau}_i = \max\left\{ \max_{j,k} \tau_{ij}^k, \max_{j} \sigma_{ij} , \tau_i \right\}$, $i=1, \dots, n$.
\end{definition}

Let $x_i,i=1,\dots,n$ be a solution of problem~\eqref{2.1},~\eqref{2.2}.
Introduce the matrix function $\Phi(t)=\{\varphi_1(t),\dots,\varphi_n(t)\}$ with $\varphi_i$ as columns.

\begin{definition}\label{definition2.2}
System~\eqref{2.1} is called {\bf uniformly exponentially stable}, if there exist
positive constants $H$ and $\nu$,  such that  any solution
$X$
of~\eqref{2.1},~\eqref{2.2} satisfies
$$
\|x_i(t)\|\leq H e^{-\nu (t-t_0)}\sup_{t\in [t_0,t_0-\max_i\tilde{\tau}_i]}\|\Phi(t)\|,\,\,\,t\geq t_ 0 \geq 0, ~~i=1, \dots, n,
$$
where $H$ and $\nu$ do not depend on $t_ 0$ and $\Phi$.
\end{definition}

Consider a non-homogeneous version of~\eqref{2.1}
\begin{equation}
\label{2.4}
\begin{array}{ll}
\dot{x_i}(t)= & \displaystyle A_i(t)x_i(h_i(t)) +\sum_{j=1}^n \sum_{k=1}^{m_{ij}} B_{ij}^k(t)x_j(h_{ij}^k(t))
\vspace{2mm} \\
& \displaystyle  + \sum_{j=1}^n\int\limits_{g_{ij}(t)}^t K_{ij}(t,s)x_j(s)ds+f_i(t),~i=1,\dots,n, ~t\geq t_0 \geq 0
\end{array}
\end{equation}
and assume that $f_i\in {{\bf L}}_{\infty}^d[t_0,\infty)$. 
The definition of a solution for  \eqref{2.4}, \eqref{2.2} is similar to that of \eqref{2.1}, \eqref{2.2}.

A modification of~\cite[Lemma 2]{BDSZ1}
will further be used.

\begin{uess}\label{lemma2.1}
Let a function $a\colon [t_0,\infty)\to [0,\infty)$
be Lebesgue measurable and $\omega\in {{\bf L}}_{\infty}^1[t_0,\infty)$. Then
the inequality
$\displaystyle
\left|\int_{t_0}^t e^{-\int_s^t a(\tau)d\tau}a(s)\omega(s)ds\right|
\leq {\mathrm{ess\,sup}}_{t\in[t_0,t_1]}|\omega(t)|,\,\,\,\,
t\in [t_0,t_1],
$
holds for any $t_1>t_0$. 
\end{uess}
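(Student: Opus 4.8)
The plan is to reduce the whole estimate to the elementary fact that $\int_{t_0}^t e^{-\int_s^t a(\tau)d\tau}\,a(s)\,ds\le 1$, and then to pull the essential supremum of $|\omega|$ out of the integral. Concretely, I would set $M:=\mathrm{ess\,sup}_{t\in[t_0,t_1]}|\omega(t)|$, which is finite since $\omega\in{\bf L}_\infty^1[t_0,\infty)$, so that $|\omega(s)|\le M$ for almost every $s\in[t_0,t_1]$. Because $a\ge 0$ and the exponential kernel is non-negative, for every $t\in[t_0,t_1]$ one has
$$
\left|\int_{t_0}^t e^{-\int_s^t a(\tau)d\tau}a(s)\omega(s)\,ds\right|
\le \int_{t_0}^t e^{-\int_s^t a(\tau)d\tau}a(s)\,|\omega(s)|\,ds
\le M\int_{t_0}^t e^{-\int_s^t a(\tau)d\tau}a(s)\,ds ,
$$
so the only remaining task is to bound the last integral by $1$.

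For that step I would recognize the integrand as an exact derivative. Writing $R(s):=\int_s^t a(\tau)\,d\tau$, we have $\frac{d}{ds}R(s)=-a(s)$ a.e., hence $\frac{d}{ds}\,e^{-R(s)}=a(s)\,e^{-R(s)}$ a.e., and the fundamental theorem of calculus gives
$$
\int_{t_0}^t e^{-\int_s^t a(\tau)d\tau}a(s)\,ds=\int_{t_0}^t \frac{d}{ds}\,e^{-R(s)}\,ds=e^{-R(t)}-e^{-R(t_0)}=1-e^{-\int_{t_0}^t a(\tau)\,d\tau}\le 1 ,
$$
since $\int_{t_0}^t a(\tau)\,d\tau\ge 0$. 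Substituting this into the previous display yields the claim, uniformly in $t\in[t_0,t_1]$ and for every $t_1>t_0$.

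The one point requiring a little care, and really the only obstacle, is measure-theoretic bookkeeping: $a$ is merely assumed non-negative and measurable, so $R(s)$ need not be finite and the use of the fundamental theorem of calculus is, strictly speaking, justified only when $a$ is locally integrable — which is automatic in every application in the paper, where $a$ arises from essentially bounded coefficients. In the general case I would instead argue by monotonicity: $s\mapsto e^{-R(s)}$ is non-decreasing on $[t_0,t]$, takes values in $[0,1]$, equals $1$ at $s=t$, and its a.e.\ derivative is $a(s)e^{-R(s)}$ (which is $0$ on any subinterval where $R\equiv\infty$); for a monotone function $\int_{t_0}^t\big(e^{-R(s)}\big)'\,ds\le e^{-R(t)}-e^{-R(t_0)}\le 1$. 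Equivalently, one truncates $a$ by $a_N:=\min\{a,N\}$, applies the computation above to $a_N$, and passes to the limit. Either way the bound $\le 1$ survives, completing the proof; since the whole argument is a routine adaptation of \cite[Lemma 2]{BDSZ1}, I do not anticipate any difficulty beyond this.
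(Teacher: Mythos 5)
Your proof is correct, and the key computation --- recognizing $e^{-\int_s^t a(\tau)d\tau}a(s)$ as $\frac{d}{ds}e^{-\int_s^t a(\tau)d\tau}$ so that the kernel integrates to $1-e^{-\int_{t_0}^t a(\tau)d\tau}\le 1$ --- is exactly the standard argument behind this estimate; the paper itself gives no proof, simply citing the lemma as a modification of \cite[Lemma 2]{BDSZ1}. Your extra remark about local integrability of $a$ is a reasonable piece of care, and harmless here since in every application $a$ comes from essentially bounded coefficients.
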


Let $x_i$ be a solution of non-homogeneous system~\eqref{2.4}, where $f_i\in {\bf L}_{\infty}^d[t_0,\infty)$
satisfy
\begin{equation}\label{2.5}
x_i(t)=0,\,\,\,\,t\le t_0, ~~i=1, \dots, n.
\end{equation}

The following Bohl-Perron type result is cited from~\cite{AS, Gil}.

\begin{uess}\label{lemma2.2}
If, for any  $t_0\geq 0$ and any 
$f_i\in {{\bf L}}_{\infty}^d[t_0,\infty)$, $i=1, \dots, n$, all $x_i$ in the solution
of  problem~\eqref{2.4}, \eqref{2.5} belong to ${{\bf L}}_{\infty}^d[t_0,\infty)$, $i=1, \dots, n$
then system~\eqref{2.1} is uniformly exponentially stable.
\end{uess}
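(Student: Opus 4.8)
The plan is to derive exponential stability from the input--output boundedness hypothesis by two reductions: first, replacing the initial history by a bounded forcing term, and second, passing to an exponentially weighted system and treating it as a small perturbation of the original one. For \emph{Step 1 (from the initial value problem to zero history)}, let $x_i$ solve the homogeneous problem \eqref{2.1}, \eqref{2.2} with histories $\varphi_i$, and put $y_i(t)=x_i(t)$ for $t\ge t_0$ and $y_i(t)=0$ for $t<t_0$. By \eqref{2.3} the delayed arguments $h_i(t),h_{ij}^k(t),g_{ij}(t)$ drop below $t_0$ only for $t$ in the bounded set $[t_0,t_0+\max_i\tilde\tau_i]$; collecting in \eqref{2.1} the terms where this happens shows that $y_i$ solves the non-homogeneous problem \eqref{2.4}, \eqref{2.5} with a forcing $f_i$ supported in $[t_0,t_0+\max_i\tilde\tau_i]$ and satisfying $\|f_i\|_{[t_0,\infty)}\le C\sup_{s\le t_0}\|\Phi(s)\|$, where $C$ depends only on $\max_i\tilde\tau_i$ and the bounds in hypothesis $(i)$, hence not on $t_0$. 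Thus $f_i\in{\bf L}_\infty^d[t_0,\infty)$, and the hypothesis gives $x_i=y_i\in{\bf L}_\infty^d[t_0,\infty)$. Moreover, since the solution of \eqref{2.4}, \eqref{2.5} on any finite interval depends continuously on the forcing (step method, with a priori bounds of the type provided by Lemma \ref{lemma2.1}), the solution operator $\mathcal C\colon(f_i)\mapsto(y_i)$ is closed on ${\bf L}_\infty$, hence bounded by the closed-graph theorem; the uniformity of the bounds in $(i)$--$(ii)$ on all of $[0,\infty)$ lets us take $\|\mathcal C\|$ independent of $t_0$, so $\sup_{t\ge t_0}\|x_i(t)\|\le H_0\sup_{s\le t_0}\|\Phi(s)\|$ with $H_0$ independent of $t_0$.

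For \emph{Step 2 (exponential weight as a perturbation)}, fix $\lambda>0$ and substitute $x_i(t)=e^{-\lambda(t-t_0)}u_i(t)$. A direct computation turns \eqref{2.1} into a system of the same form for $u_i$, with $A_i(t)$ replaced by $e^{\lambda(t-h_i(t))}A_i(t)$, each $B_{ij}^k(t)$ by $e^{\lambda(t-h_{ij}^k(t))}B_{ij}^k(t)$, each $K_{ij}(t,s)$ by $e^{\lambda(t-s)}K_{ij}(t,s)$, plus one extra term $\lambda I\,u_i(t)$ on the right-hand side, which is admissible (a $B$-term with $j=i$ and zero delay). By \eqref{2.3} one has $0\le t-h_i(t)\le\tau_i$, $0\le t-h_{ij}^k(t)\le\tau_{ij}^k$ and $0\le t-s\le\sigma_{ij}$ on the range of integration, so every new coefficient differs from the original one by $O(\lambda)$ uniformly, and the extra term has norm $\lambda$. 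Hence the solution operator $\mathcal C_\lambda$ of the weighted version of \eqref{2.4}, \eqref{2.5} is an $O(\lambda)$ perturbation of $\mathcal C$ in the ${\bf L}_\infty\to{\bf L}_\infty$ operator norm, and a Neumann-series argument shows that for all sufficiently small $\lambda$ the operator $\mathcal C_\lambda$ is bounded, say $\|\mathcal C_\lambda\|\le 2\|\mathcal C\|$, still uniformly in $t_0$.

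For \emph{Step 3 (conclusion)}, fix such a $\lambda$. The weighted initial history satisfies $\sup_{s\le t_0}\|e^{\lambda(s-t_0)}\varphi_i(s)\|\le\sup_{s\le t_0}\|\Phi(s)\|$, so applying Step 1 to the weighted system (whose coefficient bounds are, for this fixed $\lambda$, comparable to the original ones) gives $\sup_{t\ge t_0}\|u_i(t)\|\le H\sup_{s\le t_0}\|\Phi(s)\|$ with $H$ independent of $t_0$. Returning to $x_i(t)=e^{-\lambda(t-t_0)}u_i(t)$ yields $\|x_i(t)\|\le He^{-\lambda(t-t_0)}\sup_{s\le t_0}\|\Phi(s)\|$ for all $t\ge t_0\ge0$, which is uniform exponential stability in the sense of Definition \ref{definition2.2} with $\nu=\lambda$.

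The main obstacle is the uniformity in $t_0$: the hypothesis only gives, for each fixed $t_0$, that bounded forcing yields bounded solutions, and this must be promoted to an operator-norm bound on $\mathcal C$ that does not depend on $t_0$. This is the core of the Bohl--Perron principle; it rests on the closed-graph theorem to obtain finiteness of $\|\mathcal C\|$ for each $t_0$, and on a translation argument exploiting that the coefficient and delay bounds in $(i)$--$(ii)$ hold uniformly on the whole half-line. A secondary technical point is the continuity of $\lambda\mapsto\mathcal C_\lambda$ near $\lambda=0$ needed in Step 2, which follows from the explicit $O(\lambda)$ bounds on the coefficient differences together with standard step-method estimates.
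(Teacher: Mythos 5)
Your argument is correct in outline, and it is precisely the standard proof of the Bohl--Perron principle: reduce the initial history to a compactly supported bounded forcing, obtain a $t_0$-uniform bound on the input--output operator $\mathcal C$ via the closed-graph theorem, and then absorb an exponential weight $e^{\lambda(t-t_0)}$ as an $O(\lambda)$ perturbation handled by a Neumann series. Note that the paper does not prove Lemma~\ref{lemma2.2} at all --- it is cited from \cite{AS,Gil} --- and your route is essentially the one found there; the steps you leave implicit (well-posedness and closedness of the solution operator, and the causality/finite-interval device needed to run the Neumann-series step without assuming $u\in{{\bf L}}_{\infty}^d[t_0,\infty)$ a priori) are standard. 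The one point worth tightening is the uniformity in $t_0$: a ``translation argument'' is not available for non-autonomous coefficients, but it is also not needed --- since the solution of the zero-history problem starting at $t_0'>t_0$ with forcing $f$ coincides on $[t_0',\infty)$ with the solution starting at $t_0$ with $f$ extended by zero, one gets $\|\mathcal C_{t_0'}\|\leq\|\mathcal C_{t_0}\|\leq\|\mathcal C_{0}\|$, which supplies the required uniform bound.
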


Below, we use the Coppel inequality~\cite{Desoer,Z}
for a system of ordinary differential equations 
\begin{equation}\label{2.6}
\dot{x}(t)=D(t)x(t),\,\,\,\,\,t\ge 0,
\end{equation}
where the columns of $D$ are in ${{\bf L}}_{\infty}^d[0,\infty)$.
Let $\Phi (t,s)$  be the fundamental matrix of~\eqref{2.6},
i.e. a solution of the problem 
$$
\dot{x}(t)=D(t)x(t),\,\,\,\,\,t>s\ge 0,\,\,\,\,\,x(s)=I.
$$

\begin{uess}[Coppel inequality]\label{lemma2.3}
The fundamental matrix $\Phi(t,s)$ of~\eqref{2.6} satisfies
\begin{equation}\label{2.7}
\left\| \Phi(t,s)\right\|\leq \exp \left({\int_s^t \mu(D (\xi)) d\xi}\right),\,\,\,\,\,t>s\ge 0.
\end{equation}
\end{uess}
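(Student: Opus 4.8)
The plan is to reduce the matrix bound to a scalar differential inequality along trajectories and then integrate it. Fix $s\ge 0$ and an arbitrary vector $x_0$, and set $x(t):=\Phi(t,s)x_0$, so that $x$ is absolutely continuous on $[s,\infty)$, satisfies $\dot x(t)=D(t)x(t)$ for almost all $t$, and $x(s)=x_0$. Since $\|\cdot\|$ is Lipschitz, $v(t):=\|x(t)\|$ is absolutely continuous as well, hence differentiable almost everywhere and recoverable as the integral of its derivative.

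Next I would estimate $\dot v$ at any point $t>s$ lying in the full-measure set where $\dot x(t)=D(t)x(t)$ and $v$ is differentiable. From $x(t+\vartheta)=(I+\vartheta D(t))x(t)+o(\vartheta)$ as $\vartheta\to0^+$, subadditivity of the vector norm and submultiplicativity of the induced matrix norm give $v(t+\vartheta)\le\|I+\vartheta D(t)\|\,v(t)+o(\vartheta)$; dividing by $\vartheta$ and letting $\vartheta\to0^+$ yields, directly from the definition of the matrix measure, $\dot v(t)\le\mu(D(t))\,v(t)$. Here one needs that $t\mapsto\mu(D(t))$ is measurable and essentially bounded: this follows from the Lipschitz estimate $|\mu(B_1)-\mu(B_2)|\le\|B_1-B_2\|$, an immediate consequence of the defining limit, together with $D\in{\bf L}_\infty^{d\times d}[0,\infty)$, so in particular $\mu(D(\cdot))$ is locally integrable.

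Then I would integrate this inequality in the almost-everywhere sense: the function $w(t):=v(t)\exp\bigl(-\int_s^t\mu(D(\xi))\,d\xi\bigr)$ is absolutely continuous on compact subintervals with $\dot w(t)\le 0$ almost everywhere, hence non-increasing, so $w(t)\le w(s)=\|x_0\|$, that is, $\|\Phi(t,s)x_0\|\le\|x_0\|\exp\bigl(\int_s^t\mu(D(\xi))\,d\xi\bigr)$ for all $t>s\ge0$. Taking the supremum over $\|x_0\|\le 1$ and using that the induced matrix norm of $\Phi(t,s)$ equals this supremum gives \eqref{2.7}. The only delicate point is the low regularity of $D$: all the pointwise relations above hold only almost everywhere, so the passage from the differential inequality to \eqref{2.7} must be carried out through absolute continuity rather than a classical Gronwall lemma; the $o(\vartheta)$ expansion and the norm inequalities are routine.
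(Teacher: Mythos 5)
Your argument is correct. Note that the paper does not prove this lemma at all: it is quoted as a known result with references to Desoer--Vidyasagar and Zahreddine, and your proof is precisely the standard derivation found there --- differentiate $v(t)=\|\Phi(t,s)x_0\|$ almost everywhere, obtain the Dini-type inequality $\dot v\le\mu(D(t))v$ from the defining limit of the matrix measure, and integrate via the absolutely continuous auxiliary function $w$. The measurability remark via the Lipschitz bound $|\mu(B_1)-\mu(B_2)|\le\|B_1-B_2\|$ correctly handles the only regularity issue raised by $D\in{\bf L}_\infty^{d\times d}[0,\infty)$, so nothing is missing.
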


\begin{remark}\label{remark2.1}
By Lemma~\ref{lemma2.3}, the condition $\mu(D(t))\leq d_0<0$, $t\in[t_0,\infty)$, $\forall t_0 \ge 0$ for the matrix measure implies
uniform exponential stability of system~\eqref{2.6}.
\end{remark}

The final auxiliary result gives an a priori estimate for the derivative of a solution.
Let $\Omega :=[t_0,t_1]\subset [t_0,\infty)$, $t_1>t_0$ and
\begin{equation*}
C_{ij}(t) :=\int_{g_{ij}(t)}^t |K_{ij}(t,s)|ds.
\end{equation*}

\begin{uess}\label{lemma2.4}
Let $x_i=x_i(t)$, $i=1,\dots,n$ be a solution of problem~\eqref{2.4},\eqref{2.5}, $\Omega=[t_0,t_1]$.
Then the derivatives $\dot{x_i}$ satisfy
\begin{equation}\label{2.10}
{\|\dot{x_i}\|}_{\Omega} \leq \|A_i\|_{[t_0,\infty)}{\|x_i\|}_{\Omega}+\sum_{j=1}^n \left(\sum_{k=1}^{m_{ij}} \|B_{ij}^k\|_{[t_0,\infty)}
+ \|C_{ij}\|_{[t_0,\infty)}\right){\|x_j\|}_{\Omega} +\|f_i\|_{[t_0,\infty)}.
\end{equation}
\end{uess}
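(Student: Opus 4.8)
The plan is to bound the right-hand side of \eqref{2.4} pointwise for almost every $t\in\Omega$ and then pass to the essential supremum. Fix a point $t\in\Omega$ at which \eqref{2.4} holds, apply the norm $\|\cdot\|$ to both sides, and use the triangle inequality together with submultiplicativity of the induced matrix norm to split $\|\dot x_i(t)\|$ into the term coming from $A_i$, the finitely many terms coming from the $B_{ij}^k$, the $n$ distributed-delay terms, and the term $\|f_i(t)\|\le\|f_i\|_{[t_0,\infty)}$. Since the bound that results will be a constant independent of $t$, taking $\mathrm{ess\,sup}_{t\in\Omega}$ at the end will yield \eqref{2.10}.

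First I would handle the point delays. By \eqref{2.3}, every shifted argument satisfies $h_i(t)\le t\le t_1$ and $h_{ij}^k(t)\le t\le t_1$; if such an argument is $\ge t_0$ it lies in $\Omega=[t_0,t_1]$, and if it is $<t_0$ the solution vanishes there by the zero initial condition \eqref{2.5}. In either case $\|x_i(h_i(t))\|\le\|x_i\|_\Omega$ and $\|x_j(h_{ij}^k(t))\|\le\|x_j\|_\Omega$; combining this with $\|A_i(t)\|\le\|A_i\|_{[t_0,\infty)}$ and $\|B_{ij}^k(t)\|\le\|B_{ij}^k\|_{[t_0,\infty)}$ produces the first two groups of terms on the right of \eqref{2.10}. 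For the distributed-delay terms I would work entrywise: each entry of $|x_j(s)|$ is at most $\|x_j\|_\Omega$ for $s\in[g_{ij}(t),t]$ (again using \eqref{2.5} when $s<t_0$), so since $|K_{ij}(t,s)|\ge0$, Remark~\ref{obvious} and componentwise monotonicity of the integral give
$$
\Bigl|\int_{g_{ij}(t)}^t K_{ij}(t,s)x_j(s)\,ds\Bigr|\;\le\;\int_{g_{ij}(t)}^t|K_{ij}(t,s)|\,|x_j(s)|\,ds\;\le\;\|x_j\|_\Omega\, C_{ij}(t)\,\mathbf{e},
$$
where $\mathbf{e}$ denotes the vector with all entries equal to $1$. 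Taking norms and using $\|C_{ij}(t)\mathbf{e}\|=\|C_{ij}(t)\|\le\|C_{ij}\|_{[t_0,\infty)}$ then yields the $\|C_{ij}\|_{[t_0,\infty)}\|x_j\|_\Omega$ contributions, completing the pointwise estimate.

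There is no genuine obstacle here: this is a routine a priori estimate. The only two points that require attention are the systematic use of the zero initial condition \eqref{2.5} whenever a point or distributed delayed argument drops below $t_0$, and the norm bookkeeping in the distributed term, namely identifying the entrywise bound against $C_{ij}(t)$ with the scalar quantity $\|C_{ij}\|_{[t_0,\infty)}$ — for the maximum norm this identification is immediate from $\|B\|_\infty=\|\,|B|\,\|_\infty$, and for a general monotone norm the same computation goes through with at most a harmless equivalence constant absorbed into the coefficient bounds.
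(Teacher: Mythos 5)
Your proof is correct and follows essentially the same route as the paper's: take norms on the right-hand side of \eqref{2.4} pointwise, use the zero initial condition \eqref{2.5} to handle delayed arguments falling below $t_0$, and observe that the resulting bound is independent of $t\in\Omega$ before passing to the essential supremum. The paper states this in two lines; your version merely fills in the bookkeeping (in particular the componentwise treatment of the distributed-delay term against $C_{ij}(t)$), which is consistent with the intended argument.
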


\begin{proof}
Let $t\in \Omega$ and $x=x(t)$ be a solution of~\eqref{2.4}, \eqref{2.5}.
By \eqref{2.5},  $x(t)=0$ if $t\le t_0$.
Estimating the norm in the left-hand side of~\eqref{2.4}, we derive
\begin{equation}\label{2.11}
\|\dot{x_i}(t)\|\leq \|A_i\|_{[t_0,\infty)}{\|x_i\|}_{\Omega}+\sum_{j=1}^n \left(\sum_{k=1}^{m_{ij}} \|B_{ij}^k\|_{[t_0,\infty)}
+ \|C_{ij}\|_{[t_0,\infty)}\right){\|x_j\|}_{\Omega} + \| f_i \|_{\Omega}.
\end{equation}
Since the right-hand side of~\eqref{2.11} does not depend on $t\in \Omega$, 
inequality~\eqref{2.11}
implies 
\eqref{2.10}.
\end{proof}

\section{Main Results}\label{main_result}

\subsection{The main theorem}

\begin{guess}\label{theorem3.1}
Let there exist $\alpha_i$ such that $\mu(A_i(t))\leq \alpha_i<0$, $i=1,\dots,n$ and $L=I-D$
be an $M-$matrix, where the entries of $D=(d_{ij})_{i,j=1}^n$ be defined as follows:
$$
\begin{array}{ll}
d_{ii}=& \displaystyle \tau_i\left\|\frac{A_i}{\mu(A_i)}\right\|_{[t_0,\infty)}
\left(\|A_i\|_{[t_0,\infty)}+\sum_{k=1}^{m_{ii}} \|B_{ii}^k\|_{[t_0,\infty)}+\|C_{ii}\|_{[t_0,\infty)}\right)
\displaystyle +\sum_{k=1}^{m_{ii}}\left\|\frac{B_{ii}^k}{\mu(A_i)}\right\|_{[t_0,\infty)} \!\!\!\!\!\!
+\left\|\frac{C_{ii}}{\mu(A_i)}\right\|_{[t_0,\infty)},
\vspace{2mm} \\
d_{ij}= & \displaystyle \tau_i\left\|\frac{A_i}{\mu(A_i)}\right\|_{[t_0,\infty)}\left(\sum_{k=1}^{m_{ij}} \|B_{ij}^k\|_{[t_0,\infty)}+\|C_{ij}\|_{[t_0,\infty)}\right)
+\sum_{k=1}^{m_{ij}}\left\|\frac{B_{ij}^k}{\mu(A_i)}\right\|_{[t_0,\infty)}+\left\|\frac{C_{ij}}{\mu(A_i)}\right\|_{[t_0,\infty)},
~~j\neq i.
\end{array}
$$
Then system \eqref{2.1} is uniformly exponentially stable.
\end{guess}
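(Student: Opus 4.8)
The plan is to apply the Bohl--Perron type result in Lemma~\ref{lemma2.2}: it suffices to show that for every $t_0\ge 0$ and every $f_i\in{\bf L}_\infty^d[t_0,\infty)$, the solution of the non-homogeneous problem \eqref{2.4},~\eqref{2.5} has all components $x_i$ bounded on $[t_0,\infty)$. To this end I would first rewrite the $i$-th equation of \eqref{2.4} in the ``decoupled leading term'' form
\[
\dot{x_i}(t)=A_i(t)x_i(t)+A_i(t)\bigl(x_i(h_i(t))-x_i(t)\bigr)+\sum_{j=1}^n\sum_{k=1}^{m_{ij}}B_{ij}^k(t)x_j(h_{ij}^k(t))+\sum_{j=1}^n\int_{g_{ij}(t)}^t K_{ij}(t,s)x_j(s)\,ds+f_i(t),
\]
and use the variation-of-constants formula with the fundamental matrix $\Phi_i(t,s)$ of $\dot{x_i}=A_i(t)x_i$. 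Since $x_i(t_0)=0$, on a fixed bounded interval $\Omega=[t_0,t_1]$ we get $x_i(t)=\int_{t_0}^t\Phi_i(t,s)g_i(s)\,ds$ where $g_i$ is the sum of all the terms after $A_i(t)x_i(t)$.

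The next step is to estimate $\|x_i\|_\Omega$. By the Coppel inequality (Lemma~\ref{lemma2.3}) and $\mu(A_i(t))\le\alpha_i<0$, we have $\|\Phi_i(t,s)\|\le e^{\int_s^t\mu(A_i(\xi))d\xi}\le e^{\int_s^t\alpha_i\,d\xi}$; more precisely, to keep the sharp constant we write $\|\Phi_i(t,s)\|\le e^{\int_s^t\mu(A_i(\xi))d\xi}$ and factor $\mu(A_i(s))$ out so that Lemma~\ref{lemma2.1} (with $a(s)=-\mu(A_i(s))\ge 0$) applies to each resulting integral, giving a bound of the form $\bigl\|\tfrac{M_i}{\mu(A_i)}\bigr\|_{[t_0,\infty)}\|(\text{coefficient})\cdot x\|_\Omega$ for the distributed and discrete delay terms $B_{ij}^k,C_{ij}$, and $\tau_i\bigl\|\tfrac{A_i}{\mu(A_i)}\bigr\|_{[t_0,\infty)}\|\dot{x_\cdot}\|_\Omega$ for the difference term $x_i(h_i(t))-x_i(t)=-\int_{h_i(t)}^t\dot{x_i}(s)\,ds$, using $0\le t-h_i(t)\le\tau_i$. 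Feeding in the a priori derivative estimate \eqref{2.10} from Lemma~\ref{lemma2.4} to control $\|\dot{x_\cdot}\|_\Omega$, and collecting the coefficients of $\|x_j\|_\Omega$, we arrive at the vector inequality $u\le Du+c$, where $u=(\|x_1\|_\Omega,\dots,\|x_n\|_\Omega)^T$, $D$ is precisely the matrix in the statement, and $c$ is a constant vector proportional to $\max_i\|f_i\|_{[t_0,\infty)}$ (with the constant independent of $t_1$).

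Finally, since $L=I-D$ is an $M$-matrix, $L^{-1}\ge 0$ exists, so $(I-D)u\le c$ together with Remark~\ref{obvious} yields $u\le(I-D)^{-1}c$, a bound uniform in $t_1$. Letting $t_1\to\infty$ shows each $x_i\in{\bf L}_\infty^d[t_0,\infty)$ with a bound independent of $t_0$, and Lemma~\ref{lemma2.2} delivers uniform exponential stability.

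I expect the main obstacle to be the careful bookkeeping in the second step: one must be sure that every integral against $\|\Phi_i(t,s)\|$ is put into the exact shape required by Lemma~\ref{lemma2.1} (i.e. an integrand $e^{-\int_s^t a}a(s)\omega(s)$ with $a\ge 0$), which forces the normalization by $\mu(A_i)$ and explains the appearance of the quotients $\|A_i/\mu(A_i)\|$, $\|B_{ij}^k/\mu(A_i)\|$, $\|C_{ij}/\mu(A_i)\|$ in $D$; and one must track that the derivative estimate \eqref{2.10} couples all indices $j$, so the off-diagonal entries $d_{ij}$ pick up the $\tau_i\|A_i/\mu(A_i)\|$ factor multiplying the $j$-th block of coefficients. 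The reduction to $u\le Du+c$ and the $M$-matrix inversion are then routine.
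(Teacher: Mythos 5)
Your proposal is correct and follows essentially the same route as the paper's own proof: the same rewriting of the leading delayed term via $x_i(h_i(t))-x_i(t)=-\int_{h_i(t)}^t\dot{x_i}(s)\,ds$, the same use of the variation-of-constants formula with the Coppel inequality and Lemma~\ref{lemma2.1} to produce the quotient norms, the same insertion of the a priori derivative bound of Lemma~\ref{lemma2.4}, and the same $M$-matrix inversion leading to a $t_1$-independent bound and the Bohl--Perron conclusion. No gaps; the bookkeeping you flag as the main obstacle is exactly what the paper carries out, and it works as you describe.
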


\begin{proof}
To apply Lemma~\ref{lemma2.2}, we explore boundedness of solutions to \eqref{2.4},\eqref{2.5}.
First, transform \eqref{2.4} to the form
$$
\dot{x_i}(t)= 
 A_i(t)x_i(t) -A_i(t)\int\limits_{h_i(t)}^t \dot{x_i}(s)ds+\sum_{j=1}^n \sum_{k=1}^{m_{ij}} B_{ij}^k(t)x_j(h_{ij}^k(t))
+\sum_{j=1}^n \int\limits_{g_{ij}(t)}^t K_{ij}(t,s)x_j(s)ds+f_i(t).
$$
Denote by $\Phi_i(t,s)$ the fundamental matrix of 
\begin{equation}\label{3.1}
\dot{x}(t)=A_i(t)x(t).
\end{equation}
Since $\mu(A_i(t))\leq \alpha_i<0$, \eqref{3.1} is uniformly exponentially stable.
Continue transformations of equation \eqref{2.4}
$$
\begin{array}{ll}
x_i(t)= & \displaystyle \int_{t_0}^t \Phi_i(t,s)\left[-A_i(s)\int\limits_{h_i(s)}^s \dot{x_i}(\xi)d\xi
+\sum_{j=1}^n \sum_{k=1}^{m_{ij}} B_{ij}^k(s)x_j(h_{ij}^k(s))\right.
\\
 & \displaystyle \left.+   \sum_{j=1}^n  \int\limits_{g_{ij}(s)}^s K_{ij}(s,\xi)x_j(\xi)d\xi\right]ds+\tilde{f_i}(t),
\end{array}
$$
where $\tilde{f_i}(t)=\int_{t_0}^t \Phi_i(t,s) f_i(s)ds \in {{\bf L}}_{\infty}^d[t_0,\infty)$. 
Hence by Lemma~\ref{lemma2.3},
$$
\begin{array}{lll}
\|x_i(t)\| & \leq  & \displaystyle \int_{t_0}^t \left. \left. e^{\int_s^t \mu(A_i(\xi))d\xi} \right| \mu(A_i(s)) \right| \left[\left\|\frac{A_i(s)}{\mu(A_i(s))}\right\|
\int\limits_{h_i(s)}^s \|\dot{x_i}(\xi)\|d\xi
\right.
\vspace{2mm} \\
& & + \displaystyle  \sum_{j=1}^n \sum_{k=1}^{m_{ij}}\left\|\frac{B_{ij}^k(s)}{\mu(A_i(s))}\right\|\|x_j(h_{ij}^k(s))\|
\left. \displaystyle +\sum_{j=1}^n \left\|\frac{C_{ij}(s)}{\mu(A_i(s))}\right\|\max_{\xi\in [g_{ij}(s),s]}\|x_j(\xi)\|\right]ds+\|\tilde{f_i}(t)\|
\vspace{2mm} \\
& \leq & \displaystyle  \tau_i\left\|\frac{A_i}{\mu(A_i)}\right\|_{[t_0,\infty)}\|\dot{x_i}\|_{\Omega}+\|\tilde{f_i}\|_{[t_0,\infty)}
+\sum_{j=1}^n \left(\sum_{k=1}^{m_{ij}}\left\|\frac{B_{ij}^k}{\mu(A_i)}\right\|_{[t_0,\infty)}
+ \left\|\frac{C_{ij}}{\mu(A_i)}\right\|_{[t_0,\infty)}\right)\|x_j\|_{\Omega}.
\end{array}
$$
Therefore  by Lemma~\ref{lemma2.4},
$$
\begin{array}{lll}
\|x_i\|_{\Omega} & \leq  & \displaystyle 
\tau_i\left\|\frac{A_i}{\mu(A_i)}\right\|_{[t_0,\infty)}\left(\|A_i\|_{[t_0,\infty)}{\|x_i\|}_{\Omega}
+\sum_{j=1}^n \left(\sum_{k=1}^{m_{ij}} \|B_{ij}^k\|_{[t_0,\infty)}+\|C_{ij}\|_{[t_0,\infty)}\right){\|x_j\|}_{\Omega}\right)
\vspace{2mm} \\
& & \displaystyle +\sum_{j=1}^n \left(\sum_{k=1}^{m_{ij}}\left\|\frac{B_{ij}^k}{\mu(A_i)}\right\|_{[t_0,\infty)}
+ \left\|\frac{C_{ij}}{\mu(A_i)}\right\|_{[t_0,\infty)}\right)\|x_j\|_{\Omega}+M_i
\vspace{2mm} \\
& = & \displaystyle d_{ii}\|x_i\|_{\Omega} +\sum_{j\neq i} d_{ij}\|x_j\|_{\Omega} +M_i,
\end{array}
$$
where $M_i>0$ are constants, not dependent on $t_1>t_0$.

Introduce the column vectors $X_{\Omega}=(\|x_1\|_{\Omega},\dots,\|x_n\|_{\Omega})^T,M=(\|M_1\|,\dots,\|M_n\|)^T$. Then $(I-D)X_\Omega\leq M$.
Since $I-D$ is an $M$-matrix, there exists  $(I-D)^{-1}\geq 0$.
Therefore (see Remark~\ref{obvious}) $X_{\Omega}\leq (I-D)^{-1}M$, where the right-hand side of the last inequality does not depend on the interval~$\Omega$.
Hence the solution of problem \eqref{2.4},\eqref{2.5} is bounded on $[t_0,\infty)$.
By Lemma \ref{lemma2.2}, system \eqref{2.1} is uniformly exponentially stable.
\end{proof}

\subsection{Corollaries of the main theorem}

Consider several particular cases of \eqref{2.1}.

We start with the system where the first term in each vector equation is non-delayed
\begin{equation}\label{3.2}
\dot{x_i}(t)=A_i(t)x_i(t) +\sum_{j=1}^n \sum_{k=1}^{m_{ij}} B_{ij}^k(t)x_j(h_{ij}^k(t))
+\sum_{j=1}^n  \int\limits_{g_{ij}(t)}^t K_{ij}(t,s)x_j(s)ds,~i=1,\dots,n.
\end{equation}

\begin{corollary}\label{c3.1}
Assume that $\mu(A_i(t))\leq \alpha_i<0$, $i=1,\dots,n$ and $L=I-D$
is an $M-$matrix, where $D=(d_{ij})_{i,j=1}^n$ has the entries
$$
d_{ij}=\sum_{k=1}^{m_{ij}}\left\|\frac{B_{ij}^k}{\mu(A_i)}\right\|_{[t_0,\infty)}+\left\|\frac{C_{ij}}{\mu(A_i)}\right\|_{[t_0,\infty)},~i,j=1,\dots,n.
$$
Then system \eqref{3.2} is uniformly exponentially stable.
\end{corollary}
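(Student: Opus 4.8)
The plan is to derive Corollary~\ref{c3.1} as an immediate specialization of Theorem~\ref{theorem3.1}. System~\eqref{3.2} is precisely system~\eqref{2.1} in the case when the first argument in each equation is undelayed, i.e.\ $h_i(t)\equiv t$. Hypothesis $(ii)$ then holds with $\tau_i=0$ for every $i=1,\dots,n$, while the remaining delays $h_{ij}^k$, $g_{ij}$ and all the coefficient matrices $A_i$, $B_{ij}^k$, $K_{ij}$ are unchanged, so the quantities $C_{ij}$ are the same as before.

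Next I would substitute $\tau_i=0$ into the formulas for the entries $d_{ii}$ and $d_{ij}$ of the matrix $D$ in Theorem~\ref{theorem3.1}. The terms carrying the factor $\tau_i$ --- namely $\tau_i\bigl\|A_i/\mu(A_i)\bigr\|_{[t_0,\infty)}\bigl(\|A_i\|_{[t_0,\infty)}+\sum_{k}\|B_{ii}^k\|_{[t_0,\infty)}+\|C_{ii}\|_{[t_0,\infty)}\bigr)$ in $d_{ii}$, and the analogous term in $d_{ij}$ --- all vanish, leaving
\[
d_{ij}=\sum_{k=1}^{m_{ij}}\Bigl\|\frac{B_{ij}^k}{\mu(A_i)}\Bigr\|_{[t_0,\infty)}+\Bigl\|\frac{C_{ij}}{\mu(A_i)}\Bigr\|_{[t_0,\infty)},\qquad i,j=1,\dots,n,
\]
which is exactly the matrix $D$ in the statement of Corollary~\ref{c3.1}.

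Finally, under the corollary's hypotheses $\mu(A_i(t))\le\alpha_i<0$ and $L=I-D$ being an $M$-matrix, all assumptions of Theorem~\ref{theorem3.1} are met: the bound $\alpha_i<0$ together with $A_i\in {{\bf L}}_{\infty}^{d \times d}[0,\infty)$ from hypothesis $(i)$ guarantees finiteness of the norms $\bigl\|A_i/\mu(A_i)\bigr\|_{[t_0,\infty)}$, $\bigl\|B_{ij}^k/\mu(A_i)\bigr\|_{[t_0,\infty)}$ and $\bigl\|C_{ij}/\mu(A_i)\bigr\|_{[t_0,\infty)}$, so that $D$ is well defined. Hence Theorem~\ref{theorem3.1} applies and yields uniform exponential stability of~\eqref{3.2}. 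I do not expect any genuine obstacle here: the only point deserving a line of justification is the identification of the undelayed first term with the choice $\tau_i=0$ in hypothesis $(ii)$, after which the conclusion is a direct corollary of Theorem~\ref{theorem3.1}.
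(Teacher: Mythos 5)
Your proof is correct and is exactly the intended derivation: the paper states Corollary~\ref{c3.1} without proof as the specialization of Theorem~\ref{theorem3.1} to $h_i(t)\equiv t$, i.e.\ $\tau_i=0$ (which hypothesis $(ii)$ explicitly allows), so that the $\tau_i$-terms in $d_{ii}$ and $d_{ij}$ drop out and only the common formula remains.
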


Next, consider a system without integral terms and with single delays in the diagonal terms
\begin{equation}\label{3.4}
\dot{x_i}(t)=A_i(t)x_i(h_i(t)) +\sum_{j\neq i} \sum_{k=1}^{m_{ij}} B_{ij}^k(t)x_j(h_{ij}^k(t)),~i=1,\dots,n.
\end{equation}

\begin{corollary}\label{c3.2}
Assume that $\mu(A_i(t))\leq \alpha_i<0, i=1,\dots,n$, and the matrix $I-D$
is an $M-$matrix, where $D=(d_{ij})_{i,j=1}^n$ are defined as
$$
d_{ii} 
= \displaystyle \tau_i\left\|\frac{A_i}{\mu(A_i)}\right\|_{[t_0,\infty)}\|A_i\|_{[t_0,\infty)},
~
d_{ij} 
= \tau_i\left\|\frac{A_i}{\mu(A_i)}\right\|_{[t_0,\infty)}\sum_{k=1}^{m_{ij}} \|B_{ij}^k\|_{[t_0,\infty)}
+\sum_{k=1}^{m_{ij}}\left\|\frac{B_{ij}^k}{\mu(A_i)}\right\|_{[t_0,\infty)},
~j\neq i.
$$
Then system \eqref{3.4} is uniformly exponentially stable.
\end{corollary}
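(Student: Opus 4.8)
The plan is to derive Corollary~\ref{c3.2} directly from Theorem~\ref{theorem3.1}, since system~\eqref{3.4} is the special case of~\eqref{2.1} in which all integral kernels vanish, $K_{ij}\equiv 0$ for $i,j=1,\dots,n$, and there are no concentrated-delay terms on the diagonal apart from $A_i(t)x_i(h_i(t))$, i.e. $B_{ii}^k\equiv 0$ (equivalently, the inner sum in~\eqref{2.1} runs over $j\neq i$ only). The standing hypothesis $\mu(A_i(t))\le\alpha_i<0$ guarantees, as in Theorem~\ref{theorem3.1}, that each $\|A_i/\mu(A_i)\|_{[t_0,\infty)}$ is finite, so the theorem is applicable once we verify that the matrix $D=(d_{ij})_{i,j=1}^n$ appearing in its statement reduces to the one in Corollary~\ref{c3.2}.

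First I would observe that $K_{ij}\equiv 0$ gives $C_{ij}(t)=\int_{g_{ij}(t)}^t|K_{ij}(t,s)|\,ds\equiv 0$, so every summand in the Theorem~\ref{theorem3.1} formulas for $d_{ii}$ and $d_{ij}$ that contains $\|C_{ij}\|_{[t_0,\infty)}$ or $\|C_{ij}/\mu(A_i)\|_{[t_0,\infty)}$ vanishes. Next, $B_{ii}^k\equiv 0$ removes the remaining diagonal contributions $\sum_k\|B_{ii}^k\|_{[t_0,\infty)}$ and $\sum_k\|B_{ii}^k/\mu(A_i)\|_{[t_0,\infty)}$. What is left of $d_{ii}$ is then $\tau_i\|A_i/\mu(A_i)\|_{[t_0,\infty)}\,\|A_i\|_{[t_0,\infty)}$, and for $j\neq i$ only the $C_{ij}$ terms disappear, leaving $\tau_i\|A_i/\mu(A_i)\|_{[t_0,\infty)}\sum_k\|B_{ij}^k\|_{[t_0,\infty)}+\sum_k\|B_{ij}^k/\mu(A_i)\|_{[t_0,\infty)}$; both expressions coincide with the entries prescribed in Corollary~\ref{c3.2}.

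Consequently the matrix $D$ in Corollary~\ref{c3.2} is exactly the matrix $D$ furnished by Theorem~\ref{theorem3.1} for this system, so the assumption that $I-D$ is an $M$-matrix is precisely what that theorem requires, and it yields uniform exponential stability of~\eqref{3.4}. I do not expect any real obstacle here; the only point needing care is the bookkeeping of which terms survive the specialization — in particular, checking that the $j\neq i$ range of the off-diagonal sum in~\eqref{3.4} is consistent with setting $B_{ii}^k\equiv 0$ in~\eqref{2.1}, and that nothing besides the $C_{ij}$ and $B_{ii}^k$ contributions is being discarded.
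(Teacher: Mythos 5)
Your proposal is correct and is exactly the paper's (implicit) argument: Corollary~\ref{c3.2} is stated without proof as the specialization of Theorem~\ref{theorem3.1} to $K_{ij}\equiv 0$ (hence $C_{ij}\equiv 0$) and $B_{ii}^k\equiv 0$, and your bookkeeping of which terms in $d_{ii}$ and $d_{ij}$ survive is accurate.
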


We proceed to the case of two vector equations:
\begin{equation}\label{3.5}
\begin{array}{l}
\displaystyle
\dot{x_1}(t)=A_1(t)x_1(h_1(t)) +\sum_{j=1}^2 \sum_{k=1}^{m_{1j}} B_{1j}^k(t)x_j(h_{1j}^k(t))+
\sum_{j=1}^2 \int_{g_{1j}(t)}^t K_{1j} (t,s) x_j(s)ds, \vspace{2mm} \\
\displaystyle
\dot{x_2}(t)=A_2(t)x_2(h_2(t)) +\sum_{j=1}^2 \sum_{k=1}^{m_{2j}} B_{2j}^k(t)x_j(h_{2j}^k(t))+
\sum_{j=1}^2 \int_{g_{2j}(t)}^t K_{2j} (t,s) x_j(s)ds.
\end{array}
\end{equation}

\begin{corollary}\label{c3.4}
Let $\mu(A_i(t))\leq \alpha_i<0, i=1,2$ and 
\begin{equation}\label{3.6}
d_{11}<1, ~d_{22}<1, ~d_{12}d_{21}<(1-d_{11})(1-d_{22}),
\end{equation}
where
$$
\begin{array}{ll}
d_{11}= & \displaystyle \tau_1\left\|\frac{A_1}{\mu(A_1)}\right\|_{[t_0,\infty)}\left(\|A_1\|_{[t_0,\infty)}
+\sum_{k=1}^{m_{11}}\left\|B_{11}^k \right\|_{[t_0,\infty)} \!\!\!\!\!\!+\left\|C_{11} \right\|_{[t_0,\infty)}\right)
+\sum_{k=1}^{m_{11}}\left\|\frac{B_{11}^k}{\mu(A_1)}\right\|_{[t_0,\infty)}
\!\!\!\!\!\! +\left\|\frac{C_{11}}{\mu(A_1)}\right\|_{[t_0,\infty)},
\\
d_{22}= &  \displaystyle \tau_2\left\|\frac{A_2}{\mu(A_2)}\right\|_{[t_0,\infty)}\left(\|A_2\|_{[t_0,\infty)}
+\sum_{k=1}^{m_{22}}\left\|B_{22}^k \right\|_{[t_0,\infty)} \!\!\!\!\!\!+\left\|C_{22} \right\|_{[t_0,\infty)}\right)
+\sum_{k=1}^{m_{22}}\left\|\frac{B_{22}^k}{\mu(A_2)}\right\|_{[t_0,\infty)}
\!\!\!\!\!\! +\left\|\frac{C_{22}}{\mu(A_2)}\right\|_{[t_0,\infty)},
\end{array}
$$
$$
\begin{array}{ll}
d_{12}= &  \displaystyle \tau_1\left\|\frac{A_1}{\mu(A_1)}\right\|_{[t_0,\infty)}\left(\sum_{k=1}^{m_{12}}\left\|B_{12}^k \right\|_{[t_0,\infty)}
+\left\|C_{12} \right\|_{[t_0,\infty)}\right)
+ \displaystyle \sum_{k=1}^{m_{12}}\left\|\frac{B_{12}^k}{\mu(A_1)}\right\|_{[t_0,\infty)}
+\left\|\frac{C_{12}}{\mu(A_1)}\right\|_{[t_0,\infty)},
\\
d_{21}= &  \displaystyle \tau_2\left\|\frac{A_2}{\mu(A_2)}\right\|_{[t_0,\infty)}\left(\sum_{k=1}^{m_{21}}\left\|B_{21}^k \right\|_{[t_0,\infty)}
+\left\|C_{21} \right\|_{[t_0,\infty)}\right)
+\sum_{k=1}^{m_{21}}\left\|\frac{B_{21}^k}{\mu(A_2)}\right\|_{[t_0,\infty)}
+\left\|\frac{C_{21}}{\mu(A_1)}\right\|_{[t_0,\infty)}.
\end{array}
$$
Then system \eqref{3.5} is uniformly exponentially stable.
\end{corollary}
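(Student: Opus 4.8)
The plan is to obtain Corollary~\ref{c3.4} as the case $n=2$ of Theorem~\ref{theorem3.1}. System~\eqref{3.5} is precisely~\eqref{2.1} written for two vector equations, with coefficients $A_i,B_{ij}^k,K_{ij}$ and delays $h_i,h_{ij}^k,g_{ij}$ indexed by $i,j\in\{1,2\}$ and satisfying hypotheses $(i)$--$(ii)$. Comparing the formulas, the four numbers $d_{11},d_{12},d_{21},d_{22}$ displayed in the statement are exactly the entries of the matrix $D=(d_{ij})_{i,j=1}^{2}$ from Theorem~\ref{theorem3.1}, with $C_{ij}(t)=\int_{g_{ij}(t)}^{t}|K_{ij}(t,s)|\,ds$ as there (and all the norms, together with the factor $\tau_i$, are finite by $(i)$--$(ii)$ and by $\mu(A_i)\le\alpha_i<0$). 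So it suffices to show that~\eqref{3.6} is equivalent to the assertion that $L=I-D$ is a non-singular $M$-matrix; Theorem~\ref{theorem3.1} then yields uniform exponential stability of~\eqref{3.5}.

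For the $M$-matrix check I would argue as follows. Write $L=\begin{pmatrix} 1-d_{11} & -d_{12} \\ -d_{21} & 1-d_{22}\end{pmatrix}$. Every term in the definitions of $d_{12}$ and $d_{21}$ is non-negative, hence $d_{12}\ge0$, $d_{21}\ge0$, so the off-diagonal entries $-d_{12},-d_{21}$ of $L$ are $\le0$, which is the sign requirement in the definition of an $M$-matrix. By condition~(2) of that definition it then remains to verify that the leading principal minors of $L$ are positive: the first is $1-d_{11}$, positive by the first inequality in~\eqref{3.6}, and the second is $\det L=(1-d_{11})(1-d_{22})-d_{12}d_{21}$, positive by the third inequality in~\eqref{3.6}. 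Hence $L$ is a non-singular $M$-matrix and Theorem~\ref{theorem3.1} applies. The middle inequality $d_{22}<1$ in~\eqref{3.6} is in fact a consequence of the other two: from $1-d_{11}>0$, $d_{12}d_{21}\ge0$ and $(1-d_{11})(1-d_{22})>d_{12}d_{21}$ one gets $1-d_{22}>0$; I would keep it in the statement only for symmetry.

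There is essentially no obstacle here: the corollary is just Theorem~\ref{theorem3.1} with $n=2$, read through the $2\times2$ leading-minor characterization of $M$-matrices. The only steps requiring attention are confirming that the displayed $d_{ij}$ reproduce the Theorem~\ref{theorem3.1} entries verbatim --- in particular that the distributed-delay terms enter only via $\|C_{ij}\|_{[t_0,\infty)}$ and $\|C_{ij}/\mu(A_i)\|_{[t_0,\infty)}$ --- and making explicit the equivalence between~\eqref{3.6} and positivity of the two leading principal minors of $I-D$.
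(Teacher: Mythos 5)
Your proof is correct and is exactly the argument the paper intends: the corollary is Theorem~\ref{theorem3.1} with $n=2$, combined with the leading-principal-minor characterization of a $2\times 2$ $M$-matrix, which is why the paper states it without a separate proof. Your observation that $d_{22}<1$ is redundant given the other two inequalities in \eqref{3.6} is also correct (and you rightly read the final $\mu(A_1)$ in the displayed $d_{21}$ as a typo for $\mu(A_2)$).
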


For only two delays in each vector equation in \eqref{3.5}, we get
\begin{equation}\label{3.7}
\begin{array}{lll}
\dot{x_1}(t) & = & A_1(t)x_1(h_1(t)) +B_{12}(t)x_2(h_{12}(t)), \\ 
\dot{x_2}(t) & = & A_2(t)x_2(h_2(t)) +B_{21}(t)x_1(h_{21}(t)).
\end{array}
\end{equation}

\begin{corollary}\label{c3.5}
Let $\mu(A_i(t))\leq \alpha_i<0$, $i=1,2$ and there exist constant matrices $\tilde{A_1},\tilde{A_2}$, $\tilde{B}_{12}$, $\tilde{B}_{21}$, such that
$|A_i(t)|\leq \tilde{A_i}$, $i=1,2$, $|B_{12}(t)|\leq \tilde{B}_{12}$, $|B_{21}(t)|\leq \tilde{B}_{21}$.
If 
\begin{equation}\label{3.8}
\begin{array}{l}
 \displaystyle  \tau_i\|\tilde{A_i}\|^2<|\alpha_i|, i=1,2, 
\vspace{2mm} \\ \displaystyle 
\|\tilde{B}_{12}\|\|\tilde{B}_{21}\|(1+\tau_1 \|\tilde{A_1}\|)(1+\tau_2 \|\tilde{A_2}\|)
<(|\alpha_1|-\tau_1\|\tilde{A_1}\|^2)(|\alpha_2|-\tau_2\|\tilde{A_2}\|^2)
\end{array}
\end{equation}
then system \eqref{3.7} is uniformly exponentially stable.
\end{corollary}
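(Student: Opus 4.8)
The plan is to obtain Corollary~\ref{c3.5} as a direct specialization of Corollary~\ref{c3.4}. System~\eqref{3.7} is exactly~\eqref{3.5} with $m_{11}=m_{22}=0$, $m_{12}=m_{21}=1$, $B_{12}^1=B_{12}$, $B_{21}^1=B_{21}$, and all kernels $K_{ij}$ identically zero, so $C_{ij}\equiv 0$. Substituting this into the definitions of $d_{ij}$ in Corollary~\ref{c3.4}, every term involving $C_{ij}$ and the diagonal matrices $B_{ii}^k$ disappears, leaving
\[
d_{11}=\tau_1\left\|\frac{A_1}{\mu(A_1)}\right\|_{[t_0,\infty)}\|A_1\|_{[t_0,\infty)},\qquad d_{12}=\tau_1\left\|\frac{A_1}{\mu(A_1)}\right\|_{[t_0,\infty)}\|B_{12}\|_{[t_0,\infty)}+\left\|\frac{B_{12}}{\mu(A_1)}\right\|_{[t_0,\infty)},
\]
and the analogous expressions for $d_{22},d_{21}$ with index $1$ replaced by $2$. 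Hence it suffices to verify the three inequalities~\eqref{3.6}.

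Next I would replace the variable coefficients by their constant majorants. Since $\mu(A_i(t))\le\alpha_i<0$ a.e., we have $|\mu(A_i(t))|\ge|\alpha_i|$; combined with $|A_i(t)|\le\tilde{A_i}$, $|B_{12}(t)|\le\tilde{B}_{12}$, $|B_{21}(t)|\le\tilde{B}_{21}$ and monotonicity of the matrix norm under the entrywise order, this gives $\|A_i\|_{[t_0,\infty)}\le\|\tilde{A_i}\|$, $\left\|A_i/\mu(A_i)\right\|_{[t_0,\infty)}\le\|\tilde{A_i}\|/|\alpha_i|$, $\left\|B_{12}/\mu(A_1)\right\|_{[t_0,\infty)}\le\|\tilde{B}_{12}\|/|\alpha_1|$, and similarly for $B_{21}$. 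Plugging these into the formulas above yields
\[
d_{11}\le\frac{\tau_1\|\tilde{A_1}\|^2}{|\alpha_1|},\quad d_{22}\le\frac{\tau_2\|\tilde{A_2}\|^2}{|\alpha_2|},\quad d_{12}\le\frac{\|\tilde{B}_{12}\|(1+\tau_1\|\tilde{A_1}\|)}{|\alpha_1|},\quad d_{21}\le\frac{\|\tilde{B}_{21}\|(1+\tau_2\|\tilde{A_2}\|)}{|\alpha_2|}.
\]

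Finally I would match these bounds against~\eqref{3.8}. The first line $\tau_i\|\tilde{A_i}\|^2<|\alpha_i|$ gives immediately $d_{ii}\le\tau_i\|\tilde{A_i}\|^2/|\alpha_i|<1$, which are the first two conditions in~\eqref{3.6} and also show $1-d_{ii}\ge 1-\tau_i\|\tilde{A_i}\|^2/|\alpha_i|>0$. Multiplying the upper bounds for $d_{12},d_{21}$, multiplying the positive lower bounds for $1-d_{11},1-d_{22}$, and dividing the second line of~\eqref{3.8} by $|\alpha_1||\alpha_2|>0$, one obtains
\[
d_{12}d_{21}\le\frac{\|\tilde{B}_{12}\|\|\tilde{B}_{21}\|(1+\tau_1\|\tilde{A_1}\|)(1+\tau_2\|\tilde{A_2}\|)}{|\alpha_1||\alpha_2|}<\frac{(|\alpha_1|-\tau_1\|\tilde{A_1}\|^2)(|\alpha_2|-\tau_2\|\tilde{A_2}\|^2)}{|\alpha_1||\alpha_2|}\le(1-d_{11})(1-d_{22}),
\]
which is the remaining condition in~\eqref{3.6}. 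Thus Corollary~\ref{c3.4} applies and system~\eqref{3.7} is uniformly exponentially stable. This argument is essentially bookkeeping and has no serious obstacle; the points requiring a little care are the passage from entrywise majorants to norm bounds (i.e. that $|M|\le N$ implies $\|M\|\le\|N\|$ for the norm in use, which holds for the norms induced by $\ell^p$ vector norms) and the observation that all of $1-d_{ii}$ and $|\alpha_i|-\tau_i\|\tilde{A_i}\|^2$ are strictly positive by the first line of~\eqref{3.8}, so that the products of inequalities above are legitimate. The real content is already contained in Corollary~\ref{c3.4}; condition~\eqref{3.8} is simply the clean set of inequalities on the constant bounds that forces~\eqref{3.6}.
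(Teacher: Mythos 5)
Your proposal is correct and follows essentially the same route as the paper: specialize Corollary~\ref{c3.4} to system~\eqref{3.7} (so all $C_{ij}$ and diagonal $B_{ii}^k$ vanish), bound $d_{11},d_{22},d_{12},d_{21}$ by the constant majorants using $|\mu(A_i(t))|\ge|\alpha_i|$ and the entrywise-to-norm monotonicity, and check that~\eqref{3.8} forces the three inequalities in~\eqref{3.6}. The paper's own proof records exactly the same four bounds (its inequalities~\eqref{3.9} and~\eqref{3.10}) and the same final comparison.
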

\begin{proof}
Let us check that conditions of Corollary \ref{c3.4} hold, where  $B_{ii} \equiv 0, C_{ij}\equiv 0$. 
For brevity, we omit the interval index $[t_0,\infty)$ for the norms of matrix functions.
We have 
$$
d_{ii}=\tau_i\left\|\frac{A_i}{\mu(A_i)}\right\|\|A_i\|,~i=1,2,~
$$$$
d_{12}= \tau_1\left\|\frac{A_1}{\mu(A_1)}\right\|\|B_{12}\|+\left\|\frac{B_{12}}{\mu(A_1)}\right\| ,~
d_{21}= \tau_2\left\|\frac{A_2}{\mu(A_2)}\right\|\|B_{21}\|+\left\|\frac{B_{21}}{\mu(A_2)}\right\|.
$$ 

The inequality $|A| \leq B$ implies $\| |A| \| \leq \| B \|$ for any matrix norm. Hence
\begin{equation}\label{3.9}
d_{ii}\leq \tau_i\frac{\|\tilde{A_i}\|^2}{|\alpha_i|}, i=1,2,
\end{equation}
\begin{equation}\label{3.10}
d_{12}\leq \frac{\|\tilde{B}_{12}\|(1+\tau_1 \|\tilde{A_1}\|)}{|\alpha_1|},~
d_{21}\leq \frac{\|\tilde{B}_{21}\|(1+\tau_2 \|\tilde{A_2}\|)}{|\alpha_2|}.
\end{equation}
Inequality \eqref{3.9} and the first inequality in \eqref{3.8} imply $d_{ii}<1$, $i=1,2$. By \eqref{3.10}, inequality $d_{12}d_{21}<(1-d_{11})(1-d_{22})$ holds if
$$
\frac{\|\tilde{B}_{12}\|\|\tilde{B}_{21}\|(1+\tau_1 \|\tilde{A_1}\|)(1+\tau_2 \|\tilde{A_2}\|)}{|\alpha_1||\alpha_2|}<
\left(1-\frac{\tau_1\|\tilde{A_1}\|^2}{|\alpha_1|}\right)\left(1-\frac{\tau_2\|\tilde{A_2}\|^2}{|\alpha_2|}\right),
$$
which is equivalent to the second inequality in \eqref{3.8}.
Since all the conditions in \eqref{3.6} hold, by Corollary \ref{c3.4} system \eqref{3.7} is uniformly exponentially stable.
\end{proof}

\begin{remark}\label{remark3.1}
Theorem~\ref{theorem3.1} and its corollaries also hold for vector differential equations 
if the terms $\displaystyle \int\limits_{g_{ij}^k(t)}^t K_{ij} (t,s)x_j(s)ds$
in equation \eqref{2.1} are replaced with $\displaystyle C_{ij} (t) \int\limits_{g_{ij} (t)}^t d_s R_{ij} (t,s)x_j(s)$, where distributed delays are of a more general type, and
$\displaystyle  \int\limits_{g_{ij}(t)}^t d_s |R_{ij}(t,s)|=I$.
\end{remark}

\section{Applications to Second Order Vector Delay Differential Equations}

\subsection{Linear equations}

In this section we consider a second order vector equation
\begin{equation}\label{4.1}
\ddot{x}(t)+A(t)\dot{x}(t)+B(t)x(h(t))=0,~~ t\geq t_0\geq 0,
\end{equation}
where for the matrices $A, B$ and the delay $h$ the same conditions  hold as $(i), (ii)$ for system \eqref{2.1}.
Definitions of solutions of an initial value problem for vector equation \eqref{4.1} 
and of uniform exponential stability are also similar to the definitions for \eqref{2.1},
and we omit them. 

\begin{guess}\label{theorem4.1}
Assume that for some $t\geq t_0\geq 0$, there exist a constant matrix $\tilde{A}$ and  numbers $\alpha<0$, $\tau\geq 0$ such that 
$t-h(t)\leq \tau$, $\mu(-\tilde{A})<0$, $\mu\left(\tilde{A}-2A(t)\right)\leq \alpha<0$
and 
\begin{equation}\label{4.0}
\frac{1}{|\mu(-\tilde{A})|}\left(\left\|\frac{2\tilde{A}A-\tilde{A}^2-4B}{\mu(\tilde{A}-2A)}\right\|_{[t_0,\infty)} \!\!\!\!\!\!
+2\tau \left\|\frac{\tilde{A}B}{\mu(\tilde{A}-2A)}\right\|_{[t_0,\infty)}\right)
<1-2\tau\left\|\frac{B}{\mu(\tilde{A}-2A)}\right\|_{[t_0,\infty)}  \!\!.
\end{equation}
Then equation \eqref{4.1} is uniformly exponentially stable.
\end{guess}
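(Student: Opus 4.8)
The plan is to reduce \eqref{4.1} to a system of two first order vector delay differential equations of the type \eqref{3.5} and to deduce the result from Corollary~\ref{c3.4}. Since a constant matrix $\tilde A$ is at our disposal, the natural substitution is $y:=\dot x+\tfrac12\tilde A x$, for which $\dot y=\ddot x+\tfrac12\tilde A\dot x$ with no derivative of a coefficient appearing --- this is precisely the role of approximating the variable matrix $A(t)$ by the constant $\tilde A$. Inserting $\ddot x=-A\dot x-Bx(h)$ and $\dot x=y-\tfrac12\tilde A x$, and rewriting the only delayed term through $x(h(t))=x(t)-\int_{h(t)}^t\dot x(s)\,ds=x(t)-\int_{h(t)}^t\bigl(y(s)-\tfrac12\tilde A x(s)\bigr)ds$, one arrives at a system of the form \eqref{3.5} with $x_1=x$, $x_2=y$, in which the leading matrices $A_1=-\tfrac12\tilde A$ and $A_2=\tfrac12(\tilde A-2A)$ carry no delay, the first equation couples to $x_2$ through the identity matrix, and the second equation couples to $x_1$ through the non-delay matrix $\tfrac14(2\tilde A A-\tilde A^2-4B)$ and through distributed-delay kernels over $[h(t),t]$ equal to $-\tfrac12\tilde A B$ (on $x_1$) and $B$ (on $x_2$). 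The measure hypotheses transfer immediately: $\mu(A_1)=\tfrac12\mu(-\tilde A)<0$ and $\mu(A_2(t))=\tfrac12\mu(\tilde A-2A(t))\le\tfrac12\alpha<0$ by positive homogeneity of $\mu$.

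Next I would apply Corollary~\ref{c3.4} to this two-equation system. Because the first equation has no self-coupling and no delay in its leading term, its entries are $d_{11}=0$ and $d_{12}=\bigl\|I/\mu(A_1)\bigr\|=1/|\mu(A_1)|=2/|\mu(-\tilde A)|$. For the second equation, the bound $C_{2j}(t)=\int_{h(t)}^t|K_{2j}(t,s)|\,ds\le(t-h(t))\,|K_{2j}(t)|$ together with $t-h(t)\le\tau$, $\||M|\|=\|M\|$, and $|\mu(A_2)|=\tfrac12|\mu(\tilde A-2A)|$ yields $d_{22}\le 2\tau\bigl\|B/\mu(\tilde A-2A)\bigr\|$ and $d_{21}\le\tfrac12\bigl\|(2\tilde A A-\tilde A^2-4B)/\mu(\tilde A-2A)\bigr\|+\tau\bigl\|\tilde A B/\mu(\tilde A-2A)\bigr\|$; the factors $2$ and $2\tau$ are produced by the $\tfrac12$--scaling in the definition of $y$. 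Substituting these into the three requirements of \eqref{3.6}, the inequalities $d_{11}<1$ and $d_{22}<1$ are subsumed by the positivity of the right-hand side of \eqref{4.0}, while $d_{12}d_{21}<(1-d_{11})(1-d_{22})$ collapses, after multiplying out, to exactly \eqref{4.0}. Corollary~\ref{c3.4} then gives uniform exponential stability of the reduced system, and since $\dot x=x_2-\tfrac12\tilde A x_1$, this transfers to $x$ and $\dot x$, i.e.\ to \eqref{4.1}; the passage through the non-homogeneous problem with zero initial data, which also makes the identity $x(h)=x-\int_h^t\dot x$ legitimate (the solution being then absolutely continuous), is handled exactly as in Lemma~\ref{lemma2.2} and the proof of Theorem~\ref{theorem3.1}.

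The step I expect to be the main obstacle is the bookkeeping of the reduction: choosing to split $Bx(h)$ into a non-delay part and a distributed-delay part, and then keeping track of the factors $2$ and $2\tau$ coming from $|\mu(A_i)|=\tfrac12|\mu(\cdot)|$ and $t-h(t)\le\tau$, so that the several pieces of $d_{21}$ and $d_{22}$ recombine into the exact grouped expression $2\tilde A A-\tilde A^2-4B$ of \eqref{4.0}. A secondary point requiring care is that all the estimates must remain valid for an arbitrary vector norm and its induced matrix measure, so only monotonicity of $\|\cdot\|$ under entrywise absolute values and positive homogeneity of $\mu$ may be invoked.
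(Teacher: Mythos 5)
Your proposal is correct and follows essentially the same route as the paper: the substitution $y=\dot x+\tfrac12\tilde A x$, the rewriting $x(h(t))=x(t)-\int_{h(t)}^t\dot x(s)\,ds$, the reduction to a system of the form \eqref{3.5} with $A_1=-\tfrac12\tilde A$, $A_2=\tfrac12(\tilde A-2A)$, and the application of Corollary~\ref{c3.4} with exactly the entries $d_{11}=0$, $d_{12}=2/|\mu(-\tilde A)|$, and the stated bounds on $d_{21},d_{22}$, which recombine into \eqref{4.0}. The factor-of-two bookkeeping you flag as the main hazard is handled the same way in the paper via $\mu(\tfrac12 M)=\tfrac12\mu(M)$.
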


\begin{proof}
Equation \eqref{4.1}  can be transformed to the form
\begin{equation}\label{4.2}
\ddot{x}(t)+A(t)\dot{x}(t)+B(t)x(t)-B(t)\int_{h(t)}^t \dot{x}(s)ds=0.
\end{equation}
After the substitution 
$$\dot{x}(t)=-\frac{\tilde{A}}{2}x(t)+y(t), ~~\ddot{x}(t)=\frac{\tilde{A}^2}{4}x(t)-\frac{\tilde{A}}{2}y(t)+ \dot{y}(t),$$ 
equation \eqref{4.2} becomes
$$
\frac{\tilde{A}^2}{4}x(t)-\frac{\tilde{A}}{2}y(t)+ \dot{y}(t)+A(t)\left[-\frac{\tilde{A}}{2}x(t)+y(t)\right]
+B(t)x(t)-B(t)\int\limits_{h(t)}^t \left(-\frac{\tilde{A}}{2}x(s)+y(s)\right)ds=0.
$$
Hence equation \eqref{4.1} is equivalent to the system
\begin{equation}\label{4.3}
\begin{array}{ll}
\dot{x}(t) = & \displaystyle  -\frac{\tilde{A}}{2}x(t)+y(t) \vspace{2mm} \\
\dot{y}(t) = & \displaystyle \left(\frac{\tilde{A}}{2}-A(t)\right)y(t)+\left(\frac{\tilde{A}}{2}A(t)-\frac{\tilde{A}^2}{4}-B(t)\right)x(t) \vspace{2mm} \\
& \displaystyle -\frac{\tilde{A}}{2}B(t)\int_{h(t)}^t x(s)ds+B(t)\int_{h(t)}^t y(s)ds=0.
\end{array}
\end{equation}
For system \eqref{4.3}, let us apply Corollary \ref{c3.4}. System \eqref{4.3} has a form of \eqref{3.5}, where
$$
x_1=x, ~x_2=y, ~~A_1=-\frac{\tilde{A}}{2}, ~~h_1(t)=t, ~\tau_1=0, ~~m_{11}=0, ~m_{12}=1, $$ $$B_{11}=0, ~B_{12}=1,
~~h_{12}(t)=t, ~~C_{11}=C_{12}=0, 
$$
$$
A_2=\frac{\tilde{A}}{2}-A(t), ~h_2(t)=t, ~\tau_2=0, ~~m_{21}=1, ~m_{22}=0, ~~B_{21}=\frac{\tilde{A}}{2}A(t)-\frac{\tilde{A}^2}{4}-B(t),
$$$$
C_{21}=\frac{\tilde{A}}{2}|B(t)|(t-h(t))\leq \frac{\tilde{A}}{2}|B(t)|\tau,~~
C_{22}=|B(t)|(t-h(t))\leq |B(t)|\tau.
$$
We have 
$$
\mu(A_1)<0,~\mu(A_2)<0,~~ d_{11}=0, ~d_{22}\leq \tau\left\|\frac{B}{\mu(\frac{\tilde{A}}{2}-A)}\right\|_{[t_0,\infty)}, 
d_{12}=\frac{2}{|\mu(-\tilde{A})|},
$$$$
d_{21}\leq \left\|\frac{\frac{\tilde{A}}{2}A-\frac{\tilde{A}^2}{4}-B}{\mu(\frac{\tilde{A}}{2}-A)}\right\|_{[t_0,\infty)}
+\tau \left\|\frac{\frac{\tilde{A}}{2}B}{\mu(\frac{\tilde{A}}{2}-A)}\right\|_{[t_0,\infty)}.
$$
Then $d_{12}d_{21} \leq (1-d_{11})(1-d_{22})$ takes the form
$$
\frac{2}{|\mu(-\tilde{A})|}\left(\left\|\frac{\frac{\tilde{A}}{2}A-\frac{\tilde{A}^2}{4}-B}{\mu(\frac{\tilde{A}}{2}-A)}\right\|_{[t_0,\infty)}
+\tau \left\|\frac{\frac{\tilde{A}}{2}B}{\mu(\frac{\tilde{A}}{2}-A)}\right\|_{[t_0,\infty)}\right)
<1-\tau\left\|\frac{B}{\mu(\frac{\tilde{A}}{2}-A)}\right\|_{[t_0,\infty)},
$$
which is equivalent to \eqref{4.0}. 
Hence all the conditions of Corollary~\ref{c3.4} hold for \eqref{4.3}. Thus, system \eqref{4.3}
and therefore equation \eqref{4.2} are uniformly exponentially stable.
\end{proof}

\begin{remark}\label{remark4.1}
If $A_1\leq A(t)\leq A_2$ then anyone of $A_1,A_2$ can be taken as $\tilde{A}$, see examples below.
\end{remark}

\begin{corollary}\label{c4.0}
Let  there exist a constant matrix $\tilde{A}$,  $\alpha<0$ and $\tau\geq 0$ such that 
for $t\geq t_0\geq 0$, $t-h(t)\leq \tau$, $\mu(-\tilde{A})<0$, $\displaystyle \mu\left(\tilde{A}-2A(t)\right)\leq \alpha<0$ and
\begin{equation}\label{4.a}
\|2\tilde{A}A-\tilde{A}^2-4B\|_{[t_0,\infty)}
+2\tau\|\tilde{A}B\|_{[t_0,\infty)}
<|\mu(-\tilde{A})| \left( |\alpha|-2\tau \|B\|_{[t_0,\infty)} \right).
\end{equation}
Then equation \eqref{4.1} is uniformly exponentially stable.
\end{corollary}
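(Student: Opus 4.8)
The plan is to deduce condition \eqref{4.0} of Theorem~\ref{theorem4.1} directly from the stronger, fully explicit hypothesis \eqref{4.a}, and then simply invoke Theorem~\ref{theorem4.1}. Since $\tilde A$ is a constant matrix with $\mu(-\tilde A)<0$, the number $|\mu(-\tilde A)|$ is a fixed positive constant; and since $\mu(\tilde A-2A(t))\le\alpha<0$ almost everywhere, we have $|\mu(\tilde A-2A(t))|\ge|\alpha|>0$ a.e., hence $1/|\mu(\tilde A-2A(t))|\le 1/|\alpha|$ a.e.

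The one elementary step to record is the pointwise-then-essential-supremum bound: for any $M\in{\bf L}_\infty^{d\times d}[t_0,\infty)$,
$$\left\|\frac{M}{\mu(\tilde A-2A)}\right\|_{[t_0,\infty)}=\operatorname{ess\,sup}_{t\ge t_0}\frac{\|M(t)\|}{|\mu(\tilde A-2A(t))|}\le\frac{1}{|\alpha|}\,\|M\|_{[t_0,\infty)}.$$
I would apply this with $M=2\tilde AA-\tilde A^2-4B$ and $M=\tilde AB$ to bound the left-hand side of \eqref{4.0} from above by $\frac{1}{|\mu(-\tilde A)||\alpha|}\bigl(\|2\tilde AA-\tilde A^2-4B\|_{[t_0,\infty)}+2\tau\|\tilde AB\|_{[t_0,\infty)}\bigr)$, and with $M=B$ to bound $2\tau\|B/\mu(\tilde A-2A)\|_{[t_0,\infty)}$ from above by $\frac{2\tau}{|\alpha|}\|B\|_{[t_0,\infty)}$, hence the right-hand side of \eqref{4.0} from below by $1-\frac{2\tau}{|\alpha|}\|B\|_{[t_0,\infty)}$.

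It then remains only to observe that \eqref{4.a}, after dividing both sides by the positive constant $|\mu(-\tilde A)|\,|\alpha|$, is precisely the inequality
$$\frac{1}{|\mu(-\tilde A)||\alpha|}\Bigl(\|2\tilde AA-\tilde A^2-4B\|_{[t_0,\infty)}+2\tau\|\tilde AB\|_{[t_0,\infty)}\Bigr)<1-\frac{2\tau}{|\alpha|}\|B\|_{[t_0,\infty)},$$
i.e.\ the upper bound just obtained for the left-hand side of \eqref{4.0} is strictly smaller than the lower bound just obtained for its right-hand side. Chaining the estimates, $\text{LHS}\eqref{4.0}\le(\text{upper bound})<(\text{lower bound})\le\text{RHS}\eqref{4.0}$, so \eqref{4.0} holds and Theorem~\ref{theorem4.1} gives uniform exponential stability of \eqref{4.1}.

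There is essentially no obstacle: the corollary is a transparent weakening of Theorem~\ref{theorem4.1}, obtained by replacing the quotient norms $\|\cdot/\mu(\tilde A-2A)\|_{[t_0,\infty)}$ with the cruder majorants $\|\cdot\|_{[t_0,\infty)}/|\alpha|$. The only point worth a line of care is that the chain $\text{LHS}\le\text{UB}<\text{LB}\le\text{RHS}$ is valid with no sign assumption on $|\alpha|-2\tau\|B\|_{[t_0,\infty)}$ (or on $1-2\tau\|B/\mu(\tilde A-2A)\|_{[t_0,\infty)}$): it uses only the two norm inequalities above and the strict positivity of the constants $|\mu(-\tilde A)|$ and $|\alpha|$ by which we divide. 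Of course \eqref{4.a} forces both of those quantities to be positive in any case.
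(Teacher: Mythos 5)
Your proposal is correct and is exactly the intended argument: the paper states Corollary~\ref{c4.0} without proof as an immediate consequence of Theorem~\ref{theorem4.1}, obtained by the very majorization you use, namely $\left\|M/\mu(\tilde A-2A)\right\|_{[t_0,\infty)}\le \|M\|_{[t_0,\infty)}/|\alpha|$ together with division of \eqref{4.a} by the positive constant $|\mu(-\tilde A)|\,|\alpha|$. Nothing further is needed.
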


Consider now a second order system with constant coefficients
\begin{equation}\label{4.4}
\ddot{x}(t)+A\dot{x}(t)+Bx(h(t))=0.
\end{equation}

Choosing $\tilde{A}=A$, $\alpha = \mu(-A)$ in Corollary~\ref{c4.0}, we get a result for \eqref{4.4}.

\begin{corollary}\label{c4.1}
Let $t-h(t) \leq \tau<\infty$ for $t\geq t_0\geq 0$, 
$\mu(-A)<0$ and 
\begin{equation}\label{4.5}
\|A^2-4B\|+2\tau \|AB\|<|\mu(-A)|(|\mu(-A)|-2\tau \|B\|).
\end{equation}
Then equation \eqref{4.4} is uniformly exponentially stable.
\end{corollary}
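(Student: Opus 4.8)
The plan is to obtain this as the special case $\tilde{A}=A$ of Corollary~\ref{c4.0}, exploiting that here the coefficient matrices are constant, so every norm $\|\cdot\|_{[t_0,\infty)}$ appearing in Corollary~\ref{c4.0} collapses to the ordinary matrix norm $\|\cdot\|$.

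First I would fix $\tilde{A}:=A$ and $\alpha:=\mu(-A)$, and verify the structural hypotheses of Corollary~\ref{c4.0}. The delay estimate $t-h(t)\le\tau$ is assumed; $\mu(-\tilde{A})=\mu(-A)<0$ is assumed; and $\mu(\tilde{A}-2A(t))=\mu(A-2A)=\mu(-A)=\alpha<0$, so the requirement $\mu(\tilde{A}-2A(t))\le\alpha<0$ holds with equality. Thus every hypothesis of Corollary~\ref{c4.0} except inequality~\eqref{4.a} is in force.

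Next I would simplify inequality~\eqref{4.a} under the choice $\tilde{A}=A$. The matrix combinations become $2\tilde{A}A-\tilde{A}^2-4B=2A^2-A^2-4B=A^2-4B$ and $\tilde{A}B=AB$, while $|\mu(-\tilde{A})|=|\mu(-A)|$ and $|\alpha|=|\mu(-A)|$. Since $A$ and $B$ are constant, the $[t_0,\infty)$-norms equal the corresponding matrix norms, so~\eqref{4.a} reads exactly
$$
\|A^2-4B\|+2\tau\|AB\|<|\mu(-A)|\bigl(|\mu(-A)|-2\tau\|B\|\bigr),
$$
which is precisely~\eqref{4.5}. Applying Corollary~\ref{c4.0} then yields uniform exponential stability of~\eqref{4.4}.

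There is no genuinely hard step here; the statement is a clean specialization, with all the real work already carried by Theorem~\ref{theorem4.1} and Corollary~\ref{c4.0}. The only points worth care are bookkeeping ones: one should note that~\eqref{4.5} is consistent only when $|\mu(-A)|>2\tau\|B\|$, since its left-hand side is nonnegative, so this positivity of the right-hand side is automatically built into the hypothesis and requires no separate assumption; and one should check that taking $\alpha=\mu(-A)$ rather than a strictly smaller number is admissible, which it is, because Corollary~\ref{c4.0} only demands $\mu(\tilde{A}-2A(t))\le\alpha<0$.
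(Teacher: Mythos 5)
Your proposal is correct and coincides with the paper's own argument: the paper obtains Corollary~\ref{c4.1} precisely by setting $\tilde{A}=A$ and $\alpha=\mu(-A)$ in Corollary~\ref{c4.0}, under which $2\tilde{A}A-\tilde{A}^2-4B$ reduces to $A^2-4B$ and \eqref{4.a} becomes \eqref{4.5}. Your additional bookkeeping remarks (consistency of \eqref{4.5} and admissibility of the non-strict choice of $\alpha$) are accurate but not needed beyond what the paper states.
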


Next, taking $h(t)=t$ in Theorem \ref{theorem4.1}, we get a result for a vector ordinary differential equation of the second order.

\begin{corollary}\label{c4.2}
Let  there exist a constant matrix $\tilde{A}$, $\alpha<0$ and $\tau\geq 0$ such that 
for $t\geq t_0\geq 0$, $\mu(-\tilde{A})<0$, $\displaystyle \mu\left(\tilde{A}-2A(t)\right)\leq \alpha<0$ and
\begin{equation}\label{4.5a}
\|2\tilde{A}A-\tilde{A}^2-4B\|_{[t_0,\infty)}
<|\alpha||\mu(-\tilde{A})|.
\end{equation}
Then equation \eqref{1.1} is uniformly exponentially stable.
\end{corollary}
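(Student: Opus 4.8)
The plan is to derive Corollary~\ref{c4.2} as the specialization of Theorem~\ref{theorem4.1} to the non-delay situation $h(t)\equiv t$. First I would note that when $h(t)\equiv t$ one has $t-h(t)=0$, so the delay bound $t-h(t)\le\tau$ required in Theorem~\ref{theorem4.1} holds with the choice $\tau=0$; the remaining hypotheses of that theorem, namely the existence of a constant matrix $\tilde{A}$ with $\mu(-\tilde{A})<0$ and $\mu(\tilde{A}-2A(t))\le\alpha<0$, are exactly the ones assumed in the corollary, and equation~\eqref{1.1} is precisely equation~\eqref{4.1} with $h(t)\equiv t$.

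Next I would substitute $\tau=0$ into the stability inequality~\eqref{4.0}. The two summands carrying the factor $\tau$ then vanish, and~\eqref{4.0} reduces to
\[
\frac{1}{|\mu(-\tilde{A})|}\left\|\frac{2\tilde{A}A-\tilde{A}^2-4B}{\mu(\tilde{A}-2A)}\right\|_{[t_0,\infty)}<1.
\]
It remains to verify that~\eqref{4.5a} implies this. Since $\mu(\tilde{A}-2A(t))$ is a scalar function with $|\mu(\tilde{A}-2A(t))|\ge|\alpha|$ for a.e.\ $t$, dividing the matrix $2\tilde{A}A(t)-\tilde{A}^2-4B(t)$ by this scalar and taking the induced norm gives, pointwise,
\[
\left\|\frac{2\tilde{A}A(t)-\tilde{A}^2-4B(t)}{\mu(\tilde{A}-2A(t))}\right\|=\frac{\|2\tilde{A}A(t)-\tilde{A}^2-4B(t)\|}{|\mu(\tilde{A}-2A(t))|}\le\frac{\|2\tilde{A}A(t)-\tilde{A}^2-4B(t)\|}{|\alpha|},
\]
hence $\left\|\frac{2\tilde{A}A-\tilde{A}^2-4B}{\mu(\tilde{A}-2A)}\right\|_{[t_0,\infty)}\le\frac{1}{|\alpha|}\|2\tilde{A}A-\tilde{A}^2-4B\|_{[t_0,\infty)}$. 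Combining this with~\eqref{4.5a} gives
\[
\frac{1}{|\mu(-\tilde{A})|}\left\|\frac{2\tilde{A}A-\tilde{A}^2-4B}{\mu(\tilde{A}-2A)}\right\|_{[t_0,\infty)}\le\frac{\|2\tilde{A}A-\tilde{A}^2-4B\|_{[t_0,\infty)}}{|\alpha|\,|\mu(-\tilde{A})|}<1,
\]
which is~\eqref{4.0} with $\tau=0$.

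With all hypotheses of Theorem~\ref{theorem4.1} verified for $\tau=0$ and $h(t)\equiv t$, I would then invoke that theorem to conclude uniform exponential stability of~\eqref{4.1} with $h(t)\equiv t$, that is, of~\eqref{1.1}. I do not anticipate a genuine obstacle here; the only delicate point is to make sure the passage from~\eqref{4.5a} to~\eqref{4.0} preserves the \emph{strict} inequality, which it does because $|\mu(\tilde{A}-2A(t))|\ge|\alpha|>0$ turns the intermediate comparison into an upper bound while~\eqref{4.5a} is itself strict.
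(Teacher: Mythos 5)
Your proposal is correct and follows essentially the same route as the paper: the authors obtain Corollary~\ref{c4.2} precisely by taking $h(t)\equiv t$ (hence $\tau=0$) in Theorem~\ref{theorem4.1} and replacing the pointwise denominator $\mu(\tilde{A}-2A(t))$ by its bound $\alpha$, exactly the two steps you carry out (the latter being the same estimate that produces Corollary~\ref{c4.0} from the theorem). Your verification that the bound $|\mu(\tilde{A}-2A(t))|\ge|\alpha|$ preserves the strict inequality when passing from~\eqref{4.5a} to~\eqref{4.0} is accurate.
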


\begin{corollary}\label{c4.3}
Assume that $\mu(-A)<0$ and $\|A^2-4B\|<(\mu(-A))^2$.
Then equation \eqref{1.1} with constant matrices $A$ and $B$ is uniformly exponentially stable.
\end{corollary}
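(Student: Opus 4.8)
The plan is to deduce Corollary~\ref{c4.3} directly from Corollary~\ref{c4.2} by making the natural constant choice $\tilde{A}=A$. With this choice the three standing hypotheses of Corollary~\ref{c4.2} collapse onto the single assumption $\mu(-A)<0$: indeed $\mu(-\tilde{A})=\mu(-A)<0$, and $\mu(\tilde{A}-2A)=\mu(A-2A)=\mu(-A)$, so one may simply set $\alpha:=\mu(-A)<0$, turning $\mu(\tilde{A}-2A(t))\le\alpha<0$ into an equality. Since the coefficients are constant (and $h(t)=t$ plays no role in Corollary~\ref{c4.2}), no delay bound enters; formally one may take $\tau=0$.

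Next I would simplify the two sides of \eqref{4.5a} under $\tilde{A}=A$. The matrix $2\tilde{A}A-\tilde{A}^2-4B$ collapses to $2A^2-A^2-4B=A^2-4B$. On the right-hand side, $|\alpha|\,|\mu(-\tilde{A})|=|\mu(-A)|^2=(\mu(-A))^2$, the last equality holding because $\mu(-A)<0$, so $|\mu(-A)|=-\mu(-A)$ and squaring removes the sign. Hence \eqref{4.5a} becomes exactly $\|A^2-4B\|<(\mu(-A))^2$, which is the hypothesis of the corollary. Invoking Corollary~\ref{c4.2} then yields uniform exponential stability of \eqref{1.1}.

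There is essentially no obstacle: the statement is a clean specialization, and the only care needed is the bookkeeping of absolute values and squares in the identity $|\alpha|\,|\mu(-\tilde{A})|=(\mu(-A))^2$, together with the observation that $\tilde{A}=A$ is an admissible choice and that $\mu(-A)<0$ is precisely what makes both $\mu(-\tilde{A})<0$ and the existence of a valid $\alpha$ automatic. As an alternative route, the same conclusion also follows from Corollary~\ref{c4.1} with $\tau=0$, which one may prefer if arguing through the delay version.
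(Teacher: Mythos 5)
Your derivation is correct and matches the paper's intended route: Corollary~\ref{c4.3} is exactly the specialization of Corollary~\ref{c4.2} (equivalently, Corollary~\ref{c4.1} with $\tau=0$) to constant matrices with $\tilde{A}=A$ and $\alpha=\mu(-A)$, under which $2\tilde{A}A-\tilde{A}^2-4B=A^2-4B$ and $|\alpha|\,|\mu(-\tilde{A})|=(\mu(-A))^2$. Nothing is missing.
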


\begin{example}\label{example4.1}
Consider equation \eqref{4.1}, where $t-h(t)\leq \tau$,
$\displaystyle
A(t)=\left(\begin{array}{cc}
4&\sin^2 t\\
\cos^2 t&6
\end{array}\right)$,
$\displaystyle
B(t)=\left(\begin{array}{cc}
4&2\sin^2 t\\
2\cos^2 t&8
\end{array}\right)$.
Choosing  $\displaystyle \tilde{A}:=\left(\begin{array}{cc}
4&1\\
1 &6
\end{array}\right)$, we have $\mu(-\tilde{A})=-3<0$.

$\tilde{A}-2A(t)=\left(\begin{array}{cc}
-4&1-2\sin^2 t\\
1-2\cos^2 t &-6
\end{array}\right),~~\mu(\tilde{A}-2A(t))\leq \alpha=-3<0$.
Next,
$$
2\tilde{A}A(t)= \left(\begin{array}{cc}
32+2\cos^2 t&12+8\sin^2 t\\
8+12\cos^2 t &72+2\sin^2 t
\end{array}\right),~~
\tilde{A}^2=\left(\begin{array}{cc}
17&10\\
10&37
\end{array}\right),~~
4B(t)=\left(\begin{array}{cc}
16&8\sin^2 t\\
8\cos^2 t &32
\end{array}\right),
$$$$
2\tilde{A}A(t)-\tilde{A}^2-4B(t)=
\left(\begin{array}{ll}
-1+2\cos^2 t&2\\
-2+4\cos^2 t &3+2\sin^2 t
\end{array}\right),
~~\tilde{A}B(t) =
\left(\begin{array}{ll}
16+2\cos^2 t&8+8\sin^2 t\\
4+12\cos^2 t &48+2\sin^2 t
\end{array}\right),
$$$$
\|2\tilde{A}A(t)-\tilde{A}^2-4B(t)\|_{[0,\infty)}=7,~\|\tilde{A}B\|_{[0,\infty)}=66, \|B\|_{[0,\infty)}=10.
$$
For $\alpha =-3$, inequality \eqref{4.a} is $7+132 \tau <3(3-20 \tau)$, which
holds if $\tau<\frac{1}{96}$. Thus for  $\tau<\frac{1}{96}$ vector equation \eqref{4.1} is 
uniformly exponentially stable.
\end{example}

Next, consider equation \eqref{4.1} with a non-zero right-hand side 
\begin{equation}\label{4.1a}
\ddot{x}(t)+A(t)\dot{x}(t)+B(t)x(h(t))=L(t),~~ t\geq t_0\geq 0.
\end{equation}

The following statement  can be justified for a variety of uniformly exponentially stable differential equations with bounded delays, 
we prove it here for second order vector equations.

\begin{uess}\label{lemma4.1}
Let equation \eqref{4.1} be uniformly exponentially stable and $L(t)$ have one of the following properties:

a) $L(t)$ is bounded, $t \in [t_0,\infty)$;

b) $\int_t^{t+1} \|L(s)\|ds$ is bounded, $t \in [t_0,\infty)$;

c) $\lim_{t\rightarrow\infty} L(t)=0$;

d)  $\lim_{t\rightarrow\infty} \int_t^{t+1} \|L(s)\|ds =0$.

Then any solution of equation \eqref{4.1a} possesses the same property.
\end{uess}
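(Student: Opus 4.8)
\emph{Proof plan.} The plan is to reduce \eqref{4.1a} to a first order system, use a variation‑of‑parameters representation, and then estimate the forced term separately in each of the cases (a)--(d). Concretely, I would apply the substitution $\dot x(t)=-\frac{\tilde A}{2}x(t)+y(t)$ of the proof of Theorem~\ref{theorem4.1}: then \eqref{4.1a} is equivalent (algebraically, for any constant $\tilde A$) to system \eqref{4.3} with the extra forcing term $F(t)=(0,L(t))^{T}$ added to the right‑hand side, i.e. $\dot z(t)=(\mathcal L z)(t)+F(t)$, where $z=(x,y)^{T}$ and $\mathcal L$ is exactly the linear delay operator of the homogeneous system associated with \eqref{4.1}. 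A solution $x$ of \eqref{4.1a} yields a solution $z$ of this system with $\|x(t)\|\le\|z(t)\|$ and, in a suitable product norm, $\|F(t)\|=\|L(t)\|$, and uniform exponential stability of \eqref{4.1} is equivalent to that of the homogeneous version of \eqref{4.3} (the same equivalence already used in the proof of Theorem~\ref{theorem4.1}); so it suffices to control $z$.

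Next I would invoke the classical fact (the $W$‑method / Bohl--Perron theory, see \cite{AS,Gil} and Lemma~\ref{lemma2.2}) that uniform exponential stability of $\dot z=\mathcal L z$ is equivalent to an exponential bound, uniform in $t_0$, on its Cauchy (fundamental) matrix $W(t,s)$: there are $H\ge1$, $\nu>0$ independent of $t_0$ with $\|W(t,s)\|\le He^{-\nu(t-s)}$ for $t\ge s\ge t_0$. The solution of the forced system then has the form $z(t)=z_{0}(t)+v(t)$, where $v(t)=\int_{t_0}^{t}W(t,s)F(s)\,ds$ and $z_{0}$ solves the homogeneous system with the same initial function. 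By uniform exponential stability $z_{0}$ decays exponentially, hence satisfies each of (a)--(d); and for the forced term $\|v(t)\|\le H\int_{t_0}^{t}e^{-\nu(t-s)}\|L(s)\|\,ds$.

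Finally I would estimate this convolution case by case. In (a), $\|v(t)\|\le H\nu^{-1}\|L\|_{[t_0,\infty)}$. In (b), decomposing $[t_0,t]$ into unit subintervals gives $\|v(t)\|\le\frac{H}{1-e^{-\nu}}\sup_{s\ge t_0}\int_{s}^{s+1}\|L\|$, so $x$ is bounded and a fortiori $\int_{t}^{t+1}\|x\|$ is bounded. In (c), given $\varepsilon>0$ pick $T$ with $\|L(s)\|<\varepsilon$ for $s\ge T$ and split $\int_{t_0}^{t}=\int_{t_0}^{T}+\int_{T}^{t}$: the first part is at most $He^{-\nu(t-T)}\int_{t_0}^{T}\|L\|\to0$ as $t\to\infty$, the second is at most $H\nu^{-1}\varepsilon$, so $\limsup_{t\to\infty}\|v(t)\|\le H\nu^{-1}\varepsilon$ for every $\varepsilon$, hence $v(t)\to0$. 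Case (d) combines the unit‑interval decomposition with the same tail argument: the finitely many subintervals nearest $t$ contribute a vanishing amount as $t\to\infty$, while the remaining ones carry a factor $e^{-\nu j}$ that is arbitrarily small for $j$ large. In all cases $\|x(t)\|\le\|z_{0}(t)\|+\|v(t)\|$ inherits the property of $L$.

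The step I expect to be the main obstacle is the exponential estimate on $W(t,s)$ together with the variation‑of‑parameters formula for the delay system, established uniformly in $t_0$, since Definition~\ref{definition2.2} is phrased through solutions with prescribed initial functions rather than directly through $W$; this is classical but must be cited from \cite{AS,Gil} with attention to the uniformity. (For case (a) alone one can bypass $W$ entirely, using the converse direction of the Bohl--Perron theorem --- bounded forcing yields bounded response --- combined with the exponential decay of $z_{0}$; but (c) and (d) genuinely require the decay estimate on $W(t,s)$.)
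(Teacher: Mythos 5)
Your proposal is correct and follows essentially the same route as the paper: a variation-of-parameters representation with an exponentially decaying kernel ($\|X(t,s)\|\le Me^{-\lambda(t-s)}$), a direct convolution bound for (a), the unit-interval decomposition for (b) and (d), and a tail-splitting argument for (c). The only cosmetic difference is that the paper quotes the solution representation $x(t)=X_1(t)x(t_0)+X_2(t)\dot{x}(t_0)-\int_{t_0}^{t_0+\tau}X(t,s)B(s)\varphi(h(s))\,ds+\int_{t_0}^{t}X(t,s)L(s)\,ds$ for the second-order equation directly from \cite{BDK,AMR} rather than deriving it through a reduction to a first-order system, and it writes out only case (b) in detail, dismissing (a), (c), (d) as similar, whereas you spell out all four cases.
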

\begin{proof}
According to \cite[page 3]{BDK}, \cite{AMR}, there are matrix functions $X_1(t), X_2(t), X(t,s)$ such that
a solution $x$ of equation \eqref{4.1a} with the initial condition 
$x(t)=\varphi(t)$, $t \leq t_0$ satisfies 
$$
x(t)=X_1(t)x(t_0)+X_2(t)\dot{x}(t_0)-\int_{t_0}^{t_0+\tau} X(t,s)B(s)\varphi  (h(s))ds+\int_{t_0}^t X(t,s) L(s)ds,
$$
where $\varphi  (h(s))=0, ~h(s)\geq t_0$.
Exponential stability of \eqref{4.1} implies for some $M_i>0$, $\lambda_i>0$, $i=1,2$, $M>0$, $\lambda>0$,
$$
\|X_i(t)\|\leq M_i e^{-\lambda_i t}, ~~i=1,2,~~\|X(t,s)\|\leq M e^{-\lambda (t-s)}.
$$
Assume that condition b) holds: $\int_t^{t+1} \|L(s)\|ds\leq K$ for $t\geq t_0$.
We have
$$
\begin{array}{lll}
\|x(t)\| & \leq & \displaystyle M_1 e^{-\lambda_1 t} \|x(t_0)\|+ M_2 e^{-\lambda_2 t} \|\dot{x}(t_0)\|
+M \|B\|_{[t_0,\infty)}\|\varphi \|_{[t_0-\tau,t_0]}\int\limits_{t_0}^{t_0+\tau} e^{-\lambda (t-s)}ds 
\\ & & \displaystyle  +M\int\limits_{t_0}^t e^{-\lambda (t-s)} \|L(s)\|ds
\\
 & \leq & \displaystyle  M_1\|x(t_0)\|+M_2\|\dot{x}(t_0)\| +\frac{M}{\lambda}\|B\|_{[t_0,\infty)}\|\varphi \|_{[t_0-\tau,t_0]} e^{\lambda (t_0+\tau)}
\\
 &  &
\displaystyle  +M\int\limits_{t_0}^t e^{-\lambda (t-s)} \|L(s)\|ds.
\end{array}
$$

Let $t\in [t_0+m, t_0+m+1]$ for some integer $m \geq 0$.
Then
$$
\begin{array}{ll}
\displaystyle \int\limits_{t_0}^t e^{-\lambda (t-s)} \|L(s)\|ds & \leq \displaystyle \sum_{k=0}^m \int\limits_{t_0+k}^{t_0+k+1} e^{-\lambda (t-s)} \|L(s)\|ds
\leq \sum_{k=0}^m  e^{-\lambda (m-k)} \int\limits_{t_0+k}^{t_0+k+1} \|L(s)\|ds \\ & \displaystyle \leq K\sum_{k=0}^m e^{-\lambda (m-k)}
< \frac{K}{1-e^{-\lambda}},
\end{array}
$$
where the last expression does not depend on $m$. Hence the solution $x$ of equation \eqref{4.1a} is bounded, and $\int_t^{t+1} \|x(s)\|\, ds$ is bounded.
Therefore a) and b) yield that solutions of equation \eqref{4.1a} are bounded. Similarly, c)-d) imply convergence of all solutions to zero. 
\end{proof}

\begin{guess}\label{theorem4.3}
Let all the conditions of Theorem \ref{theorem4.1} hold and $L(t)$ have one of the  properties a)-d)
of Lemma \ref{lemma4.1}. Then any solution of equation \eqref{4.1a} possesses the same property.
\end{guess}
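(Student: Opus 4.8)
The plan is to chain together the two results already in hand, with essentially no new work. First I would invoke Theorem~\ref{theorem4.1}: since all of its hypotheses are assumed in the statement, the homogeneous equation~\eqref{4.1} (i.e.\ \eqref{4.1a} with $L\equiv 0$) is uniformly exponentially stable. This is the only place where the explicit inequality~\eqref{4.0} and the matrix-measure conditions $\mu(-\tilde A)<0$, $\mu(\tilde A-2A(t))\le\alpha<0$ are used; they are consumed entirely inside Theorem~\ref{theorem4.1}.

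Second, I would apply Lemma~\ref{lemma4.1} directly. Its two hypotheses are precisely ``equation~\eqref{4.1} is uniformly exponentially stable'' and ``$L$ has one of the properties a)--d)'', and both now hold: the first by the previous paragraph, the second by assumption. The conclusion of Lemma~\ref{lemma4.1} is exactly the conclusion of the theorem, namely that every solution of~\eqref{4.1a} inherits the corresponding property among a)--d). So the argument is a one-line deduction: by Theorem~\ref{theorem4.1} equation~\eqref{4.1} is uniformly exponentially stable, hence Lemma~\ref{lemma4.1} applies and yields the claim.

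The only point deserving a remark — and the closest thing to an obstacle, though it is already handled in the proof of Lemma~\ref{lemma4.1} — is that Lemma~\ref{lemma4.1} rests on the variation-of-constants representation $x(t)=X_1(t)x(t_0)+X_2(t)\dot x(t_0)-\int_{t_0}^{t_0+\tau}X(t,s)B(s)\varphi(h(s))\,ds+\int_{t_0}^t X(t,s)L(s)\,ds$ together with the exponential bounds $\|X_i(t)\|\le M_i e^{-\lambda_i t}$ and $\|X(t,s)\|\le M e^{-\lambda(t-s)}$. I would simply note that these exponential bounds on the solution representation kernels are exactly what uniform exponential stability of~\eqref{4.1} delivers (this is how they are obtained in the proof of Lemma~\ref{lemma4.1}), so no further estimates are needed and the proof of Theorem~\ref{theorem4.3} is complete upon citing Theorem~\ref{theorem4.1} and then Lemma~\ref{lemma4.1}.
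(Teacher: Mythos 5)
Your proposal is correct and is exactly the argument the paper intends: Theorem~\ref{theorem4.1} supplies uniform exponential stability of \eqref{4.1}, and Lemma~\ref{lemma4.1} then transfers each of the properties a)--d) from $L$ to the solutions of \eqref{4.1a}, which is why the paper states Theorem~\ref{theorem4.3} without a separate proof.
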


\subsection{Nonlinear equations}

In this section consider a nonlinear equation
\begin{equation}\label{4.7}
\ddot{x}(t)+A(t,x(t),\dot{x}(t))\dot{x}(t)+B(t,x(t),\dot{x}(t))x(h(t))=L(t),~~ t\geq t_0\geq 0,
\end{equation}
where  the  matrices $A(\cdot,u,v)$ and $B(\cdot,u,v)$
belong to ${{\bf L}}_{\infty}^{d\times d } [0,\infty)$ for any pair $(u,v) $ of locally integrable on $[0,\infty)$ vector functions,
and $L\in {{\bf L}}_{\infty}^d [0,\infty)$.
A solution of an initial value problem for \eqref{4.7} is defined similarly to that of \eqref{4.1}.
We assume that a solution exists and is unique.

Consider first equation \eqref{4.7} with $L(t)\equiv 0$.
\begin{guess}\label{theorem4.2}
Let for any $t \geq t_0\geq 0$, $L(t)\equiv 0$,  there exist a constant matrix $\tilde{A}$ and $\alpha<0$, $\tau\geq 0$ such that  
$t-h(t)\leq \tau$, $\mu(-\tilde{A})<0$, and for any pair $(u,v)$ of constant vectors, 
$\mu(\tilde{A}-2A(t,u,v))\leq \alpha<0$
and 
\begin{equation}
\label{4.8}
\begin{array}{ll}
& \displaystyle \frac{1}{|\mu(-\tilde{A})|}\left(\left\|\frac{2\tilde{A}A(\cdot,u,v)-\tilde{A}^2-4B(\cdot,u,v)}{\mu(\tilde{A}-2A(\cdot,u,v))}\right\|_{[t_0,\infty)}\right. \vspace{1mm} \\
+ & \left. 2\tau \left\|\frac{\tilde{A}B(\cdot,u,v)}{\mu(\tilde{A}-2A(\cdot,u,v))}\right\|_{[t_0,\infty)}\right)
<1-2\tau\left\|\frac{B(\cdot,u,v)}{\mu(\tilde{A}-2A(\cdot,u,v))}\right\|_{[t_0,\infty)}.
\end{array}
\end{equation}
Then  any solution $x$ of \eqref{4.7}  with $L \equiv 0$ tends to zero as $t\rightarrow\infty$, together with its derivative.
\end{guess}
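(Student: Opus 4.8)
The plan is to reduce the nonlinear equation \eqref{4.7} (with $L\equiv 0$) to the already-treated linear case by a ``freezing'' argument combined with the equivalent first-order system. Given a solution $x$ of \eqref{4.7}, write $u(t)=x(t)$, $v(t)=\dot x(t)$ and set $\bar A(t):=A(t,u(t),v(t))$, $\bar B(t):=B(t,u(t),v(t))$. These are fixed matrix functions in ${{\bf L}}_\infty^{d\times d}[t_0,\infty)$, so $x$ is a solution of the linear equation $\ddot x(t)+\bar A(t)\dot x(t)+\bar B(t)x(h(t))=0$, which is exactly \eqref{4.1} with coefficients $\bar A,\bar B$. The point is that hypothesis \eqref{4.8}, being assumed for \emph{every} constant pair $(u,v)$, yields a bound that is uniform in $(u,v)$; in particular \eqref{4.0} holds for $\bar A,\bar B$ with the same $\tilde A$, $\alpha$, $\tau$, provided we know \eqref{4.8} for constant vectors forces the corresponding essential-sup bound along the trajectory.

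First I would carry out the substitution used in the proof of Theorem~\ref{theorem4.1}, namely $\dot x(t)=-\tfrac{\tilde A}{2}x(t)+y(t)$, turning the frozen linear equation into the system \eqref{4.3} with $A(t)$ replaced by $\bar A(t)$ and $B(t)$ by $\bar B(t)$. Then I would invoke Corollary~\ref{c3.4}: the coefficients $d_{11},d_{22},d_{12},d_{21}$ computed there involve $\|\cdot\|_{[t_0,\infty)}$ of expressions built from $\bar A,\bar B$, and the estimates in the proof of Theorem~\ref{theorem4.1} show these are controlled by the left- and right-hand sides of \eqref{4.0} evaluated at $\bar A,\bar B$. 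Since \eqref{4.8} holds pointwise in $(u,v)$, we have for a.e. $t$
$$
\left\|\frac{2\tilde A\bar A(t)-\tilde A^2-4\bar B(t)}{\mu(\tilde A-2\bar A(t))}\right\|\le \sup_{(u,v)}\left\|\frac{2\tilde A A(\cdot,u,v)-\tilde A^2-4B(\cdot,u,v)}{\mu(\tilde A-2A(\cdot,u,v))}\right\|_{[t_0,\infty)},
$$
and similarly for the other two terms; taking essential suprema over $t$ preserves the strict inequality \eqref{4.8}, so \eqref{4.0} holds for $\bar A,\bar B$. Hence by Theorem~\ref{theorem4.1} the frozen equation is uniformly exponentially stable, and since $x$ solves it, $x(t)\to 0$ and $\dot x(t)\to 0$ as $t\to\infty$.

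The main obstacle, and the point requiring care, is the logical direction of the freezing argument: we are not solving the frozen linear problem and comparing with the nonlinear solution; rather, we observe that a \emph{given} nonlinear solution is automatically a solution of the frozen linear problem, so its decay follows from the decay estimate for that linear problem. This is legitimate because uniform exponential stability of \eqref{4.1} (as established in Theorem~\ref{theorem4.1}) applies to \emph{every} solution, in particular to $x$ with its actual initial data at $t_0$; exponential stability then forces $x(t)\to0$, $\dot x(t)\to 0$. One subtlety to address is that the matrix measure $\mu(\tilde A-2\bar A(t))$ must stay bounded away from $0$ (the condition $\mu(\tilde A-2A(t,u,v))\le\alpha<0$ with the same $\alpha$ for all constant $(u,v)$ guarantees $\mu(\tilde A-2\bar A(t))\le\alpha<0$ a.e.), and that $\mu(-\tilde A)<0$ is a condition on the constant matrix $\tilde A$ alone, hence unaffected by freezing. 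With these observations the proof is a short invocation of Theorem~\ref{theorem4.1}; no new computation beyond the uniform-in-$(u,v)$ bookkeeping is needed.
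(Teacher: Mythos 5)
Your proposal is correct and follows essentially the same route as the paper: freeze the coefficients along the given solution $x$, note that $x$ solves the resulting linear equation \eqref{4.1} with coefficients $A(t,x(t),\dot x(t))$, $B(t,x(t),\dot x(t))$, verify that hypothesis \eqref{4.8}, being uniform in $(u,v)$, yields \eqref{4.0} for the frozen coefficients, and invoke Theorem~\ref{theorem4.1}. Your extra bookkeeping on passing from the pointwise-in-$(u,v)$ hypothesis to the essential supremum along the trajectory is a reasonable elaboration of a step the paper leaves implicit.
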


\begin{proof}
Let $x$ be a solution of \eqref{4.7} with $L \equiv 0$. Consider the linear equation with variable coefficients for this fixed $x$:
\begin{equation}\label{4.9}
\ddot{y}(t)+A(t,x(t),\dot{x}(t))\dot{y}(t)+B(t,x(t),\dot{x}(t))y(h(t))=0, ~~t\geq t_0\geq 0.
\end{equation}
All conditions of Theorem~\ref{theorem4.1} hold for \eqref{4.9}. Hence this equation is uniformly exponentially stable.
Then any solution $y$ of this equation tends to zero, together with its derivative . But the vector-function $x$ is one of these solutions.
Therefore any solution $x$ of \eqref{4.7} tends to zero as $t\rightarrow\infty$, together with its derivative.
\end{proof}

Consider now equation \eqref{4.7} with a non-zero right-hand side $L(t)$.
\begin{corollary}\label{c4.4}
Let all the conditions of Theorem \ref{theorem4.2} hold and $L(t)$ possess one of Properties a)-d)
of Lemma~\ref{lemma4.1}.
Then any solution of equation \eqref{4.7} has the same property.
\end{corollary}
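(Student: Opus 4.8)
The plan is to freeze the solution, reduce to a linear non-homogeneous second order equation, and then invoke Lemma~\ref{lemma4.1}. Let $x$ be a solution of \eqref{4.7}, and set $\bar{A}(t):=A(t,x(t),\dot{x}(t))$, $\bar{B}(t):=B(t,x(t),\dot{x}(t))$. Since $x$ and $\dot{x}$ are locally integrable on $[0,\infty)$, the hypotheses on $A$ and $B$ in \eqref{4.7} guarantee that $\bar{A},\bar{B}\in {{\bf L}}_{\infty}^{d\times d}[0,\infty)$, so the linear equation
\begin{equation*}
\ddot{y}(t)+\bar{A}(t)\dot{y}(t)+\bar{B}(t)y(h(t))=L(t),\quad t\geq t_0\geq 0,
\end{equation*}
is of the form \eqref{4.1a}, with associated homogeneous equation of the form \eqref{4.1}.

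Next I would check that the homogeneous equation $\ddot{y}(t)+\bar{A}(t)\dot{y}(t)+\bar{B}(t)y(h(t))=0$ satisfies all the hypotheses of Theorem~\ref{theorem4.1}. This is precisely the verification carried out in the proof of Theorem~\ref{theorem4.2}: for every $t$ the pair $(x(t),\dot{x}(t))$ is a pair of vectors, so $\mu(\tilde{A}-2\bar{A}(t))\leq\alpha<0$ and inequality \eqref{4.0} (with $\bar{A},\bar{B}$ replacing $A,B$) follow from $\mu(-\tilde{A})<0$ together with the bounds \eqref{4.8}. By Theorem~\ref{theorem4.1}, this homogeneous equation is uniformly exponentially stable.

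Finally, I would apply Lemma~\ref{lemma4.1} to the non-homogeneous equation above: since it is a uniformly exponentially stable linear second order equation driven by $L$, and $L$ has one of Properties a)--d), every solution of it possesses the same property. The function $x$ is itself a solution of this equation, hence $x$ has the same property as $L$, which is the assertion of the corollary.

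The step I expect to be the main obstacle is the reduction in the second paragraph: one must be sure that substituting $(u,v)=(x(t),\dot{x}(t))$ pointwise in $t$ into the coefficient estimates of \eqref{4.8} still yields uniform-in-$t$ control of the matrix expressions appearing in \eqref{4.0}, so that Theorem~\ref{theorem4.1} genuinely applies to $\bar{A},\bar{B}$; this is handled exactly as in the proof of Theorem~\ref{theorem4.2}. The remaining steps are direct citations of Theorem~\ref{theorem4.1} and Lemma~\ref{lemma4.1}.
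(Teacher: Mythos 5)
Your argument is correct and is essentially identical to the paper's proof: both freeze the solution $x$, pass to the linear equation \eqref{4.10} with coefficients $A(t,x(t),\dot{x}(t))$, $B(t,x(t),\dot{x}(t))$, use the hypotheses of Theorem~\ref{theorem4.2} (via Theorem~\ref{theorem4.1}) to get uniform exponential stability of its homogeneous part, and then apply Lemma~\ref{lemma4.1}, observing that $x$ is itself a solution of \eqref{4.10}. Your explicit attention to the uniform-in-$t$ substitution $(u,v)=(x(t),\dot{x}(t))$ is a point the paper leaves implicit, but the route is the same.
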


\begin{proof}
For any solution $x$ of \eqref{4.7}, consider the linear equation 
\begin{equation}\label{4.10}
\ddot{y}(t)+A(t,x(t),\dot{x}(t))\dot{y}(t)+B(t,x(t),\dot{x}(t))y(h(t))=L(t),~~ t\geq t_0\geq 0.
\end{equation}
Assume that $L(t)$ satisfies one of a)-d). Since equation \eqref{4.10}  with $L \equiv 0$ is exponentially stable, by Lemma~\ref{lemma4.1}
all solutions of \eqref{4.10}, as well as the solution $x$ of \eqref{4.7}, possess this property.
\end{proof}

\begin{corollary}\label{c4.5}
Let there exist a constant matrix $\tilde{A}$, $\alpha<0$ and $\tau\geq 0$ such that $
\mu(-\tilde{A})<0$, $t-h(t)\leq \tau$ for $t\geq t_0\geq 0$, 
and for any pair $(u,v)$ of constant vectors, 
 $\mu\left(\tilde{A}-2A(t,u,v)\right)\leq \alpha<0$ and 
\begin{equation}\label{4.11}
\begin{array}{ll}
& \displaystyle \|2\tilde{A}A(\cdot,u,v)-\tilde{A}^2-4B(\cdot,u,v)\|_{[t_0,\infty)}
+2\tau\|\tilde{A}B(\cdot,u,v)\|_{[t_0,\infty)}
\\ < & \displaystyle |\mu(-\tilde{A})|(|\alpha|-2\tau \|B(\cdot,u,v)\|_{[t_0,\infty)}).
\end{array}
\end{equation}
Then  any solution $x$ of \eqref{4.7}  with $L \equiv 0$ tend to zero as $t\rightarrow\infty$, together with its derivative.
\end{corollary}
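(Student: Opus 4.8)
The plan is to derive this corollary directly from Theorem~\ref{theorem4.2} by showing that the scalar inequality \eqref{4.11} is a sufficient condition for the more delicate inequality \eqref{4.8}. This is exactly the same reduction that passes from Theorem~\ref{theorem4.1} to Corollary~\ref{c4.0}, only now carried out uniformly over all pairs $(u,v)$ of constant vectors. So the structure is: (1) fix an arbitrary pair $(u,v)$; (2) abbreviate $A=A(\cdot,u,v)$, $B=B(\cdot,u,v)$, and note that by hypothesis $\mu(-\tilde{A})<0$ and $\mu(\tilde{A}-2A(t,u,v))\leq\alpha<0$, so in particular $|\mu(\tilde{A}-2A(t,u,v))|\geq|\alpha|$ for a.e.\ $t$; (3) use this lower bound on $|\mu(\tilde{A}-2A)|$ together with the elementary fact that $\|M/\mu(\tilde{A}-2A)\|_{[t_0,\infty)}\leq \|M\|_{[t_0,\infty)}/|\alpha|$ for any matrix function $M$, to bound each of the three norm terms appearing in \eqref{4.8}.

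Concretely, I would estimate
\[
\left\|\frac{2\tilde{A}A-\tilde{A}^2-4B}{\mu(\tilde{A}-2A)}\right\|_{[t_0,\infty)}\leq\frac{\|2\tilde{A}A-\tilde{A}^2-4B\|_{[t_0,\infty)}}{|\alpha|},
\]
and analogously for $\|\tilde{A}B/\mu(\tilde{A}-2A)\|$ and $\|B/\mu(\tilde{A}-2A)\|$. Substituting these three bounds into \eqref{4.8}, the inequality \eqref{4.8} will be implied by
\[
\frac{1}{|\mu(-\tilde{A})|}\left(\frac{\|2\tilde{A}A-\tilde{A}^2-4B\|_{[t_0,\infty)}}{|\alpha|}+\frac{2\tau\|\tilde{A}B\|_{[t_0,\infty)}}{|\alpha|}\right)<1-\frac{2\tau\|B\|_{[t_0,\infty)}}{|\alpha|}.
\]
Multiplying both sides by $|\alpha|\,|\mu(-\tilde{A})|>0$ turns this into precisely \eqref{4.11}. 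Hence \eqref{4.11} implies \eqref{4.8} for this $(u,v)$, and since $(u,v)$ was arbitrary, all hypotheses of Theorem~\ref{theorem4.2} are satisfied.

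Invoking Theorem~\ref{theorem4.2} then yields that every solution $x$ of \eqref{4.7} with $L\equiv0$ tends to zero together with its derivative, which is the claim. The only point requiring a little care — and the natural "main obstacle", though it is minor — is justifying the step $\|M/\mu(\tilde{A}-2A)\|_{[t_0,\infty)}\leq\|M\|_{[t_0,\infty)}/|\alpha|$: one should note that $\mu(\tilde{A}-2A(t,u,v))$ is a negative scalar for a.e.\ $t$, so dividing a matrix function by it is well defined, its absolute value is $\geq|\alpha|$, and therefore the essential supremum of the pointwise matrix norm is controlled as claimed. Everything else is the routine algebraic rearrangement above, identical in spirit to the proof of Corollary~\ref{c3.5} from Corollary~\ref{c3.4}.
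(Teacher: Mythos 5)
Your proposal is correct and is exactly the reduction the paper intends: the paper states Corollary~\ref{c4.5} without proof as an immediate consequence of Theorem~\ref{theorem4.2}, obtained by bounding each weighted norm $\left\|M/\mu(\tilde{A}-2A(\cdot,u,v))\right\|_{[t_0,\infty)}$ by $\|M\|_{[t_0,\infty)}/|\alpha|$ and clearing denominators, precisely as in the passage from Theorem~\ref{theorem4.1} to Corollary~\ref{c4.0}. No gaps; the only care point (that $\mu(\tilde{A}-2A(t,u,v))$ is a scalar bounded away from zero, uniformly in $t$ and $(u,v)$) is one you explicitly address.
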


\begin{example}\label{example4.2}
Consider equation \eqref{4.7}, where $L(t)\equiv 0$, $t-h(t)\leq \tau$, $x=(x_1,x_2)$, 
$$
A(t)=\left(\begin{array}{cc}
4&\sin^2 (tx_1(t))\\
\cos^2 (tx_2(t))&6
\end{array}\right),~~
B(t)=\left(\begin{array}{cc}
4&2\sin^2 (tx_1(t))\\
2\cos^2 (tx_2(t))&8
\end{array}\right).
$$
Exactly the same calculations as in Example \ref{example4.1} and condition \eqref{4.11} imply that for $\tau<\frac{1}{96}$,
any solution of \eqref{4.7} tends to zero as $t\rightarrow\infty$, together with its derivative.
\end{example}

\section{Discussion and Topics for Further Research}

We investigated uniform exponential stability for systems of linear vector functional differential equations,
as well as asymptotic stability of linear and certain nonlinear vector equations of the second order. 
System \eqref{2.1} has many applications to real-word models but, to the best of our knowledge, very little is known about its exponential stability.
One of most common applications is to second order vector delay differential equations, which is considered in the paper for
equations with one delay. However, since the general theory is developed for a more general model, the results are easily extended to equations with several bounded delays
\begin{equation}\label{5.1}
\ddot{x}(t)+A(t)\dot{x}(t)+\sum_{k=1}^m B_k(t)x(h_k(t))=0,~~ t\geq t_0\geq 0,
\end{equation}
and with a bounded distributed delay
\begin{equation}\label{5.2}
\ddot{x}(t)+A(t)\dot{x}(t)+\int_{h(t)}^t B(t,s)x(s)ds=0,~~ t\geq t_0\geq 0.
\end{equation}

Corollary \ref{c4.2} gives a new simple and general stability test for linear ordinary differential vector equations of the second order, 
compared to the papers \cite{Harris, Gil_a, Rugh, Sun, Zevin} where only matrices of a specific form were considered. 
For example, in the recent paper \cite{Gil_a} the following stability test was obtained.
Denote $\displaystyle A_R=\frac{A+A^*}{2},~A_I=\frac{A-A^*}{2i}$\,. 
\begin{proposition}\label{p5.0}
Let there exist $m_A>0$ such that
$$
A_R(t)\geq 2m_A I, ~T_R(t)\geq m_A I, ~\|T_R\|_{[0,\infty)}+\|T_I\|_{[0,\infty)}\leq 2 m_A^2,
$$
where $T(t)=m_A A(t)-B(t)$ and inequality $A\geq B$ means that  $A-B$ is a positive definite matrix (all eigenvalues are non-negative).
Then equation \eqref{1.1} is exponentially stable.
\end{proposition}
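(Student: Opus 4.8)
The plan is to reduce \eqref{1.1} to a first order system exactly as in the proof of Theorem~\ref{theorem4.1}, but with the constant shift matrix taken to be $m_A I$. Here one works over $\mathbb{C}^d$ with the Euclidean inner product $\langle\cdot,\cdot\rangle$, so that $\Re\langle M\xi,\xi\rangle=\langle M_R\xi,\xi\rangle$ for every matrix $M$; this matches the Hermitian parts $A_R,A_I,T_R,T_I$ appearing in the statement. Setting $y(t)=\dot x(t)+m_A x(t)$, one has $\dot x=y-m_A x$ and, using $\ddot x=-A\dot x-Bx$,
\begin{equation*}
\dot y \;=\; \bigl(T(t)-m_A^2 I\bigr)x \;+\; \bigl(m_A I-A(t)\bigr)y, \qquad T:=m_A A-B .
\end{equation*}
Since $x$ and $y$ are absolutely continuous for any $L_\infty$ coefficients, a Lyapunov function with constant coefficients will require no differentiability of $A$ or $B$.

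Next I would test the quadratic form $V(x,y)=m_A^2\|x\|^2+\|y\|^2$. Differentiating along solutions, the two cross terms of size $2m_A^2\Re\langle x,y\rangle$ cancel, and the part of $\tfrac{d}{dt}\|y\|^2$ not involving $x$ or $T$ equals $2m_A\|y\|^2-2\langle A_R y,y\rangle\le -2m_A\|y\|^2$ by $A_R\ge 2m_A I$, so
\begin{equation*}
\dot V \;\le\; -2m_A^3\|x\|^2 \;-\; 2m_A\|y\|^2 \;+\; 2\,\Re\langle T(t)x,y\rangle .
\end{equation*}
The hypotheses on $T$ are precisely what is needed to dominate the indefinite term: the norm bound gives $|\Re\langle Tx,y\rangle|\le(\|T_R\|+\|T_I\|)\,\|x\|\,\|y\|\le 2m_A^2\|x\|\,\|y\|$, while $T_R\ge m_A I$ supplies, through $2\Re\langle T_R x,y\rangle\le s^2\langle T_R x,x\rangle+s^{-2}\langle T_R y,y\rangle$ for a free parameter $s>0$, the extra room needed to beat the degenerate bound below. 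The goal is $\dot V\le -cV$ with $c>0$, which yields uniform exponential decay of $\|x(t)\|+\|\dot x(t)\|$, that is, exponential stability of \eqref{1.1}.

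The step I expect to be the main obstacle is this last estimate, because the three hypotheses are stated with non-strict inequalities whereas the conclusion is exponential stability. Using only Cauchy--Schwarz in the last display gives $\dot V\le -2m_A(m_A\|x\|-\|y\|)^2$, which degenerates on the line $\|y\|=m_A\|x\|$; and in the boundary configuration exponential stability can fail, as the scalar case $A\equiv 2m_A$, $B\equiv 0$, $m_A\ge\tfrac12$ shows, where $\ddot x+2m_A\dot x=0$ has the non-decaying solution $x\equiv\mathrm{const}$. Hence the argument must use a strengthened version of one of the hypotheses and combine $T_R\ge m_A I$ with $A_R\ge 2m_A I$ in a single coordinated estimate, rather than bounding the $x$-, $y$- and cross-terms independently.

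A second route, closer to the methods of \cite{Gil_a}, avoids Lyapunov functions: factor $\bigl(\tfrac{d}{dt}+m_A\bigr)\bigl(\tfrac{d}{dt}+A-m_A\bigr)x=(T-m_A^2 I)x$, observe that the Euclidean matrix measures satisfy $\mu_2(-m_A I)<0$ and $\mu_2\bigl(m_A I-A(t)\bigr)=m_A-\lambda_{\min}(A_R(t))\le -m_A<0$, so the unperturbed triangular system is uniformly exponentially stable, and then close the argument with the Bohl--Perron/a~priori-estimate bootstrap of Theorem~\ref{theorem3.1} applied with $T-m_A^2 I$ as the perturbation. The price of this route is that the factorization produces an additional term involving $\dot A$ unless $A$ is constant, so one must either impose regularity on $A$ or invoke the freezing method.
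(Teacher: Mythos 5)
The paper does not prove Proposition~\ref{p5.0}: it is quoted from \cite{Gil_a} solely for comparison with Corollary~\ref{c4.2}, so there is no internal proof to measure your attempt against, and I can only assess it on its own terms. Your reduction $y=\dot x+m_A x$, the choice $V=m_A^2\|x\|^2+\|y\|^2$, the cancellation of the cross terms, and the estimate $\dot V\le -2m_A^3\|x\|^2-2m_A\|y\|^2+2\Re\langle T(t)x,y\rangle$ are all correct. Moreover, your diagnosis that the proof cannot be finished from the hypotheses as literally transcribed is also correct, and your boundary example proves it: for $d=1$, $A\equiv 2m_A$, $B\equiv 0$ with $m_A=\tfrac12$ all three (non-strict) inequalities hold, yet $\ddot x+\dot x=0$ has a non-decaying constant solution. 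So the statement as printed (note the self-contradictory gloss ``positive definite \dots\ all eigenvalues are non-negative'') fails at the boundary, and the source must intend a strict inequality somewhere, most plausibly $\|T_R\|_{[0,\infty)}+\|T_I\|_{[0,\infty)}<2m_A^2$.

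What you missed is that once that single strictness is granted, your own computation already closes, with no ``coordinated estimate'' and, notably, with no use of the hypothesis $T_R(t)\ge m_A I$ at all. Put $\beta:=\|T_R\|_{[0,\infty)}+\|T_I\|_{[0,\infty)}$, so that $\dot V\le q(\|x\|,\|y\|)$ with $q(u,v)=-2m_A^3u^2+2\beta uv-2m_A v^2$. This form is negative definite precisely when $\beta^2<4m_A^4$, i.e.\ $\beta<2m_A^2$; in that case $\dot V\le -c\left(\|x\|^2+\|y\|^2\right)\le -c'V$ for some $c'>0$, and exponential decay of $\|x(t)\|+\|\dot x(t)\|$ follows via $\dot x=y-m_Ax$. (Alternatively, a uniform strict bound $A_R(t)\ge (2m_A+\varepsilon)I$ changes the coefficient of $v^2$ to $-(2m_A+\varepsilon)$ and again makes $q$ negative definite even when $\beta=2m_A^2$.) There is therefore no residual degeneracy along $\|y\|=m_A\|x\|$ and no parameter $s$ to optimize. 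Your second route via the operator factorization and Bohl--Perron is viable only for constant or absolutely continuous $A$, since, as you note, it generates a $\dot A$ term that the hypotheses do not control; the Lyapunov route is the right one here, and modulo the strictness issue it is essentially complete.
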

Actually it is not easy to check that all eigenvalues of time-variable matrix $T(t)$ are non-negative.

For a nonlinear delay vector equation of the second order 
\begin{equation}\label{5.4}
\ddot{X}(t)+F(X(t),\dot{X}(t))\dot{X}(t)+H(X(t-\tau))=0,
\end{equation}
where the matrix-function $F(u,v)$ and vector-function $H(u)$ are continuous,  an interesting explicit stability test was obtained in \cite{Tunc1}.

\begin{proposition}\cite{Tunc1}\label{p5.1}
Let there exist positive numbers $a_0,a_1,a_2$  such that
\\
1) the matrix $F(u,v)$ is symmetric, and the eigenvalues of this matrix $\lambda_i(F(u,v)) \geq a_1$ for all pairs $(u,v)\in {\mathbb R}^d\times {\mathbb R}^d$ .
\\
2) $H(0)=0$, $H(X)\neq 0$, $X\neq 0$, the Jacobian $J_H(X)$ is symmetric and $a_2  \leq \lambda_i(J_H(X))\leq a_0$.

If $\tau<\frac{a_1}{a_0\sqrt{d}}$ then the trivial solution of \eqref{5.4} is asymptotically stable.
\end{proposition}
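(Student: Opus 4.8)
The plan is to prove the statement by the direct Lyapunov--Krasovskii method. First I would exploit the symmetry of the Jacobian: since $J_H(X)$ is symmetric, $H$ is a gradient field, $H=\nabla G$ with $G(X):=\int_{0}^{1}H(sX)^{T}X\,ds$, $G(0)=0$. The eigenvalue bounds $a_2 I\le J_H(X)\le a_0 I$ then give $\tfrac{a_2}{2}\|X\|^{2}\le G(X)\le\tfrac{a_0}{2}\|X\|^{2}$ and $H(X)^{T}X\ge a_2\|X\|^{2}$; in particular $H(X)=0$ forces $X=0$, which recovers the hypothesis ``$H(X)\ne0$ for $X\ne0$'' and will be used at the very end. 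Next, exactly as in the transformation leading to \eqref{4.2}, I would write $H(X(t-\tau))=H(X(t))-\int_{t-\tau}^{t}J_H(X(s))\dot X(s)\,ds$ and pass to the first order system $\dot X=Y$, $\dot Y=-F(X,Y)Y-H(X(t))+\int_{t-\tau}^{t}J_H(X(s))Y(s)\,ds$, so that the delayed nonlinearity becomes an integral perturbation of a non-delay system.

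Then I would introduce the functional
\[
V(t)=\tfrac12\|Y(t)\|^{2}+G(X(t))+\gamma\int_{-\tau}^{0}\!\!\int_{t+s}^{t}\|Y(u)\|^{2}\,du\,ds,\qquad \gamma>0,
\]
which, by the quadratic bounds on $G$, is bounded below by a positive definite function of $(X(t),Y(t))$ and above by a function that is small whenever $X(t)$ and the restriction of $Y$ to $[t-\tau,t]$ are small. Differentiating $V$ along solutions, the terms $H(X(t))^{T}Y$ and $-H(X(t))^{T}Y$ cancel; condition~1) gives $-Y^{T}F(X,Y)Y\le -a_1\|Y\|^{2}$; the cross term $Y(t)^{T}\!\int_{t-\tau}^{t}J_H(X(s))Y(s)\,ds$ is controlled by Cauchy--Schwarz, the Frobenius-norm estimate $\|J_H(X)Y\|\le\sqrt d\,a_0\|Y\|$, the inequality $\bigl(\int_{t-\tau}^{t}\|Y\|\bigr)^{2}\le\tau\int_{t-\tau}^{t}\|Y\|^{2}$, and a weighted Young inequality, which together give a bound of the shape $\tfrac{\sqrt d\,a_0\tau}{2}\|Y(t)\|^{2}+\tfrac{\sqrt d\,a_0}{2}\int_{t-\tau}^{t}\|Y(u)\|^{2}\,du$; and the Krasovskii term contributes $\gamma\tau\|Y(t)\|^{2}-\gamma\int_{t-\tau}^{t}\|Y(u)\|^{2}\,du$. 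Choosing $\gamma=\tfrac{\sqrt d\,a_0}{2}$ cancels the residual integral and leaves $\dot V(t)\le(\sqrt d\,a_0\tau-a_1)\|Y(t)\|^{2}$, which is $\le -c\|Y(t)\|^{2}$ with $c>0$ precisely when $\tau<\tfrac{a_1}{\sqrt d\,a_0}$.

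Finally I would conclude asymptotic stability. Positive definiteness of the lower bound for $V$ together with $\dot V\le0$ yields stability of the zero solution and boundedness of solutions starting near it; $\dot V\le -c\|Y\|^{2}$ then forces $\dot X=Y\in L^{2}$ and $V(t)\downarrow V_\infty\ge0$. I would then invoke the LaSalle--Krasovskii invariance principle for autonomous retarded functional differential equations: the $\omega$-limit set of a bounded solution lies in the largest invariant set on which $\dot V\equiv0$, hence on which $Y\equiv0$; such invariant trajectories are constants $X\equiv c$, $\dot X\equiv0$, and substituting into \eqref{5.4} gives $H(c)=0$, so $c=0$. Hence $X(t)\to0$ and $\dot X(t)\to0$ for every solution starting near the origin, which is the assertion.

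The step I expect to be the main obstacle is obtaining the sharp delay threshold $\tau<a_1/(\sqrt d\,a_0)$: it is sensitive to how the cross term is split — the factor $\sqrt d$ comes from the Frobenius-norm bound on $J_H$ rather than the sharper spectral bound, and the choice $\gamma=\tfrac{\sqrt d\,a_0}{2}$ is precisely the one that balances the $\|Y(t)\|^{2}$ coefficient against the Krasovskii integral, so a different splitting produces a different denominator. A secondary delicate point is the passage from $\dot X(t)\to0$ to $X(t)\to0$, which does not follow from the $L^{2}$ bound alone; the cleanest route is the invariance principle, but it requires checking precompactness of the solution segments and that the only invariant set inside $\{\dot V=0\}$ is the equilibrium, the latter using $H(X)=0\iff X=0$.
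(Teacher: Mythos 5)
This statement is not proved in the paper at all: Proposition~\ref{p5.1} is quoted verbatim from the reference \cite{Tunc1}, and the authors' only comment on its proof is that it was obtained there "by the method of Krasovskii--Lyapunov functionals." Your reconstruction is therefore in the spirit of the original source rather than of anything in this paper, whose own second-order results (Theorem~\ref{theorem4.1}, Corollary~\ref{c4.1}) go through the Bohl--Perron lemma and $M$-matrices instead. As a proof sketch your argument is essentially sound: the gradient representation $H=\nabla G$ from the symmetric Jacobian, the identity $H(X(t-\tau))=H(X(t))-\int_{t-\tau}^{t}J_H(X(s))\dot X(s)\,ds$, the cancellation of $\pm H(X)^TY$ in $\dot V$, the pointwise Young inequality giving $\frac{\sqrt d\,a_0}{2}\bigl(\tau\|Y(t)\|^2+\int_{t-\tau}^t\|Y\|^2\bigr)$, the choice $\gamma=\frac{\sqrt d\,a_0}{2}$, and the LaSalle step all check out, modulo the standard technicalities you already flag (precompactness of orbits, and the fact that the rewriting of the delayed term needs $\dot X$ on the initial interval, which one handles by starting the estimate at $t_0+\tau$). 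One observation worth making explicit: since $J_H$ is symmetric with spectrum in $[a_2,a_0]$, the spectral bound $\|J_H(X)Y\|\le a_0\|Y\|$ is available and your argument would then yield the stronger threshold $\tau<a_1/a_0$ with no dimension dependence; the $\sqrt d$ you deliberately reintroduce via the Frobenius norm is an artifact of the bookkeeping in \cite{Tunc1}, and this is consistent with the paper's own discussion in Section~5, where the authors point out that Proposition~\ref{p5.1} degrades as $d\to\infty$ while their Corollary~\ref{c4.1} does not.
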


Compare now Proposition~\ref{p5.1} of \cite{Tunc1} and the test in Theorem \ref{theorem4.2}.

The stability result of Proposition \ref{p5.1} applies only to autonomous equations,
with continuous functions $G,H$ and constant delay $\tau$. Our result works for non-autonomous equations, 
with measurable coefficients and a variable delay. We also apply different stability methods: in \cite{Tunc1}
the method of Krasovskii-Lyapunov functionals was applied, in this paper the stability results are based on Bohl-Perron theorem.

The linear equation
\begin{equation}\label{5.5}
\ddot{X}(t)+A\dot{X}(t)+Bx(t-\tau)=0,
\end{equation}
where $A,B$ are constant matrices, is a partial case of equation \eqref{5.4}. Let us compare for \eqref{5.5} the results of
Corollary~\ref{c4.1} 
and Proposition~\ref{p5.1}.
For the scalar case ($d=1$) Proposition~\ref{p5.1} leads to  $B>0$, $A>0$, $B\tau<A$,
while Corollary \ref{c4.1} reduces to $B>0, A>0, |A^2-4B|<A^2-4AB\tau$. In the scalar
case (and presumably also for small $d$), Proposition~\ref{p5.1} is better than Corollary~\ref{c4.1}.
But for large $d$, Corollary~\ref{c4.1} could be better than Proposition \ref{p5.1}.
Assume, for example, that in \eqref{5.5}  $A=aI, B=bI, a>0, b>0$. Then the condition of Proposition \ref{p5.1} becomes  $ b\tau<\frac{a}{\sqrt{d}}$,
where $\tau \rightarrow 0$ for fixed $a,b$ and $ d \rightarrow\infty$.
Proposition \ref{p5.1} gives the same condition as for the scalar case: $|a^2-4b|<a^2-4ab\tau$ and does not depend on $d$.
Consider for comparison $A=aI$, $B= \frac{1}{4}a^2 I$, where $a>0$. Then 
Proposition \ref{p5.1} implies asymptotic stability for $\tau<\frac{4}{a\sqrt{d}}$ compared to $\tau<\frac{1}{a}$ in Corollary~\ref{c4.1}. Thus Corollary~\ref{c4.1} gives a sharper result for $d>16$.

Together with stability results obtained here,  the aim of the paper was also to attract attention to the new class of systems of functional differential equations.

It is not yet clear how to obtain exponential stability tests for equations
with unbounded delays, for example, pantograph-type with $h(t)=\lambda t$, and for equations with delays in the derivative terms
such as
\begin{equation}\label{5.3}
\ddot{x}(t)+A(t)\dot{x}(g(t))+B(t)x(h(t))=0.
\end{equation}

Further, let us discuss other possible problems for future research.

An interesting extension of \eqref{2.1} is a nonlinear system
$$
\dot{x_i}(t)=A_i(t)x_i(h_i(t)) +\sum_{j=1}^n \sum_{k=1}^{m_{ij}} H_{ij}(t,x_j(h_{ij}^k(t)))
+ \sum_{j=1}^n F_{ij} \left( t,\int\limits_{g_{ij}(t)}^t K_{ij}(t,s)x_j(s)ds\right),~i=1,\dots,n.
$$
Results for this system can be applied to a nonlinear vector delay equation of the second order
$$
\ddot{x}(t)+F(t,\dot{x}(t))+H(t,x(h(t))=0.
$$
Also, there are no results on asymptotic behavior of solutions to neutral systems of vector differential equations
$$
\dot{x_i}(t)-Q_i(t)\dot{x}_i(g_i(t))=A_i(t)x_i(h_i(t)) +\sum_{j=1}^n \sum_{k=1}^{m_{ij}} B_{ij}^k(t)x_j(h_{ij}^k(t))
+ \sum_{j=1}^n\int\limits_{g_{ij}(t)}^t K_{ij}(t,s)x_j(s)ds,
$$
$i=1,\dots,n$ and neutral vector equations of the second order
$$
\ddot{x}(t)+A\dot{x}(g(t))+Bx(h(t))=C\ddot{x}(p(t)).
$$
It would be interesting to study stability for systems of second order equations even in the scalar case, such as
$$
\ddot{x}(t)+A_1\dot{x}(g_1(t))+B_1y(h_1(t))=0,
$$$$
\ddot{y}(t)+A_2\dot{y}(g_2(t))+B_2x(h_2(t))=0
$$
and
$$
\ddot{x}(t)+A_1\dot{x}(g_1(t))+B_1x(h_1(t))=C_1y(p_1(t)),
$$$$
\ddot{y}(t)+A_2\dot{y}(g_2(t))+B_2y(h_2(t))=C_2x(p_2(t)).
$$

\end{document}